\documentclass[12pt,a4paper]{article}
\usepackage[T1]{fontenc}
\usepackage[utf8]{inputenc}
\usepackage{lmodern}
\usepackage[margin=27mm]{geometry}
\usepackage{amsmath,amssymb,amsthm}
\usepackage{mathtools}
\usepackage{microtype}
\usepackage{enumitem}
\usepackage{cite}
\usepackage[hidelinks,breaklinks]{hyperref}
\usepackage{fancyhdr}
\usepackage{needspace}
\newtheorem{theorem}{Theorem}[section]
\newtheorem{lemma}[theorem]{Lemma}
\newtheorem{proposition}[theorem]{Proposition}

\newtheorem{conjecture}[theorem]{Conjecture}
\theoremstyle{remark}

\newcommand{\doi}[1]{\href{https://doi.org/#1}{\nolinkurl{#1}}}

\title{\bfseries Tuza's Conjecture under an Edge-Count Condition}
\author{Zhen Liu\footnote{Email: 1552580575@qq.com}, ~Qinghou Zeng\footnote{Research supported by National Key R\&D Program of China (Grant No. 2023YFA1010202) and National Natural Science Foundation of China (Grant No. 12371342). Email: zengqh@fzu.edu.cn (Corresponding
author)}\\
{\small Center for Discrete Mathematics, Fuzhou University, Fujian, 350003, China}}
\date{}

\begin{document}
\maketitle
\thispagestyle{plain}

\begin{abstract}
For a graph $G$, let $\nu(G)$ denote the maximum number of pairwise edge-disjoint triangles and let $\tau(G)$ denote the minimum number of edges meeting every triangle. Tuza's conjecture asserts that $\tau(G)\le 2\nu(G)$ for every graph $G$. We first note that Tuza's conjecture is equivalent to the following statement: for every fixed $c\ge 0$, if $G$ is a sufficiently large $n$-vertex graph with $e(G)\ge n^2/4-cn$, then $\tau(G)\le 2\nu(G)$. In this paper, we prove this statement for every $c<2$.
\end{abstract}

\noindent\textbf{Keywords:} Tuza's conjecture; triangle packing; triangle cover.

\smallskip
\noindent\textbf{2020 MSC:} 05C35, 05C70.

\section{Introduction}
All graphs considered in this paper are finite, simple, and undirected. A \emph{triangle} is a cycle of length three. A \emph{triangle packing} is a family of pairwise edge-disjoint triangles, while a \emph{triangle cover} is a set of edges meeting every triangle. We denote by $\nu(G)$ the maximum size of a triangle packing in $G$, and by $\tau(G)$ the minimum size of a triangle cover. The union of the edges of a maximum triangle packing is a triangle cover, and hence
$
\nu(G)\le \tau(G)\le 3\nu(G).
$
In 1981, Tuza proposed the following conjecture; see~\cite{Tuz84,Tuz90}.

\begin{conjecture}[Tuza]\label{conj:tuza}
Every graph $G$ satisfies
$
\tau(G)\le 2\nu(G).
$
\end{conjecture}

Since $\nu(K_4)=1$ and $\tau(K_4)=2$, the coefficient $2$ in Tuza's conjecture is best possible. The conjecture remains open in general; the best known universal bounds are due to Haxell~\cite{Hax99}, Yi~\cite{Yi26}, and Wang~\cite{Wan26}, who proved respectively the coefficients $66/23$, $63/22$, and $165/59$.
Tuza's conjecture has been studied extensively for various special classes of graphs and through several related fractional formulations; see~\cite{Tuz90,ABT12,Pul15,BFG21,BBG22,KP22,Kri95,HKT12,CDM14}.

For dense graphs, several more precise results are known. 
Tuza~\cite{Tuz90} proved that Conjecture~\ref{conj:tuza} holds for every 
$n$-vertex graph $G$ with
$
e(G)\ge \frac{7}{16}n^2.
$
A \emph{fractional triangle packing} assigns a nonnegative weight $x_T$ 
to each triangle $T$ such that
$
\sum_{T\ni e}x_T\le 1
$
for every edge $e$. The maximum value of $\sum_T x_T$ is denoted by 
$\nu^*(G)$. Krivelevich~\cite{Kri95} proved that
$
\tau(G)\le 2\nu^*(G),
$
and Haxell and R\"odl~\cite{HR01} proved that
$
\nu^*(G)-\nu(G)=o(n^2).
$
Consequently,
$
\tau(G)\le 2\nu(G)+o(n^2).
$

Yuster~\cite{Yus12} proved that for every $\beta>0$ there are $n_0=n_0(\beta)$ and $\delta=\delta(\beta)$ such that, if a graph $G$ has $n>n_0$ vertices and $m\ge \beta n^2$ edges with $\tau(G)\ge (1-\delta)\frac{m}{2},$ then $\nu(G)\ge \left(1+\frac{\beta^2}{800}\right)\frac{m}{4}.$
Baron and Kahn~\cite{BK16} proved that for every $\alpha>0$ there are graphs $G$ of arbitrarily large order and positive density such that
$\tau(G)>(1-o(1))e(G)/2$ and
$\nu(G)<(1+\alpha)e(G)/4$.
Thus the ratio $\tau(G)/\nu(G)$ can be arbitrarily close to $2$ even for dense graphs.
More recently, Chahua and Guti\'errez~\cite{CG25} proved Tuza's conjecture for split graphs of minimum degree at least $3n/5$, proved
$\tau(G)<28\nu(G)/15$
for tripartite graphs of minimum degree greater than $33n/56$, and proved
$\tau(G)\le 3\nu(G)/2$
for complete $4$-partite graphs on at least five vertices.

The following simple observation gives a useful equivalent form of Tuza's conjecture.
\begin{proposition}\label{prop:equivalent}
Tuza's conjecture is equivalent to: for every fixed $c\ge 0$, every sufficiently large $n$-vertex graph $G$ with $e(G)\ge n^2/4-cn$ satisfies $\tau(G)\le 2\nu(G)$.
\end{proposition}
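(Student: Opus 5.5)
One direction is immediate: if Tuza's conjecture holds for all graphs, it holds in particular for dense large graphs. For the converse, the plan is to take an arbitrary graph $H$ and build from it a large graph $G$ with $e(G)\ge n^2/4-cn$ for some fixed $c$ (in fact $c=0$ will do). The construction should be such that $\nu(G)=\nu(H)+\nu(G')$ and $\tau(G)=\tau(H)+\tau(G')$, where $G'$ is an added triangle-free part. Then $\tau(G)\le 2\nu(G)$ reduces to $\tau(H)\le 2\nu(H)$.

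The construction I would use is $G=H\sqcup K_{m,m}$, the disjoint union of $H$ with a large balanced complete bipartite graph. Let $h=|V(H)|$, so $n=h+2m$. Every triangle of $G$ lies inside $H$, because $K_{m,m}$ is triangle-free and there are no edges between the two components. Hence $\nu(G)=\nu(H)$ and $\tau(G)=\tau(H)$: triangle packings and triangle covers of $G$ restrict to $H$, and those of $H$ are also packings and covers of $G$.

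For the edge count, $e(G)\ge m^2$. Since $n=h+2m$, we have $n^2/4=m^2+hm+h^2/4$. So we need $m^2\ge n^2/4-cn$, that is, $cn\ge hm+h^2/4$. Taking $c=h$ works, since $hn=h^2+2hm\ge hm+h^2/4$. This $c$ depends on $h$, but the equivalent statement is quantified over every fixed $c\ge 0$, so we may fix $c=h$ for the given $H$. By hypothesis there is then $n_0(c)$ such that every graph on $n\ge n_0$ vertices with at least $n^2/4-cn$ edges satisfies Tuza's inequality.

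To finish, choose $m$ large enough that $n=h+2m\ge n_0$. Then $G$ satisfies $\tau(G)\le 2\nu(G)$, hence $\tau(H)\le 2\nu(H)$. The only point needing care is the order of quantifiers: $c$ depends only on $H$, and it is fixed before $n$ is taken large, so it is legitimate. I expect no real obstacle. If one wanted to avoid relying on the disjoint-union structure, one could instead attach $K_{m,m}$ with no connecting edges anyway, which is exactly the construction above.
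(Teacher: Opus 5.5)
Your proof is correct and is essentially the paper's argument: the paper also takes the disjoint union $F\cup K_{r,r}$, notes that $\nu$ and $\tau$ are unchanged, and bounds the deficit by $n^2/4-e(G)\le rh+h^2/4$, except that it chooses $c=h/2$ where you choose $c=h$. The parenthetical claim that $c=0$ will do is false for this construction, because $n^2/4-e(G)=hm+h^2/4-e(H)$ grows linearly in $m$; the remark is harmless, since you never use it and in fact fix $c=h$.
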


\begin{proof}
The forward implication is immediate.

For the converse, assume the stated condition holds for every fixed $c\ge 0$. Let $F$ be an arbitrary graph on $h$ vertices. For a positive integer $r$, let $G=F\cup K_{r,r}$ be the disjoint union. Since $K_{r,r}$ is triangle-free, every triangle of $G$ lies in $F$, and hence $\nu(G)=\nu(F)$ and $\tau(G)=\tau(F)$.

Now $G$ has $n=h+2r$ vertices and $e(G)=e(F)+r^2$ edges. Therefore
\[
\frac{n^2}{4}-e(G)
=\frac{(h+2r)^2}{4}-e(F)-r^2
\le rh+\frac{h^2}{4}
\le \frac{h}{2}(h+2r)
=\frac{h}{2}n,
\]
where the last inequality follows from $h^2/4\le h^2/2$. Thus $e(G)\ge n^2/4-(h/2)n$. Taking $c=h/2$, which is fixed, and choosing $r$ sufficiently large, the assumed statement applies to $G$. Hence $\tau(G)\le 2\nu(G)$, and therefore $\tau(F)\le 2\nu(F)$. Since $F$ was arbitrary, Tuza's conjecture follows.
\end{proof}

Our main result is the following.

\begin{theorem}\label{thm:main}
For every fixed real number $c<2$, every sufficiently large $n$-vertex graph $G$ with
$
e(G)\ge \frac{n^2}{4}-cn
$
satisfies
$
\tau(G)\le2\nu(G).
$
\end{theorem}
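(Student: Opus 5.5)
We argue by minimal counterexample and reduce to a graph with few triangles relative to its order. Fix $c<2$ and suppose $G$ is a large $n$-vertex graph with $e(G)\ge n^2/4-cn$ and $\tau(G)>2\nu(G)$. Take such a $G$ with $n$ as small as possible, allowing $n$ to range over values at least some $n_0$ to be chosen later. The key tool will be the standard deletion trick. If some edge set $S$ meets every triangle through a set of vertices, or if removing a few triangles reduces the problem, then we can pass to a smaller graph that still satisfies the edge condition. The natural reduction is vertex deletion. Removing a vertex $v$ of degree $d(v)\le n/2-c'$ gives
\[
e(G-v)\ge \frac{(n-1)^2}{4}-c(n-1)
\quad\text{whenever}\quad d(v)\le \frac n2 - \frac14 + c .
\]
So a low-degree vertex whose triangles are handled cheaply lets us induct. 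Since $\tau(G)\le \tau(G-v)+\tau_v$ and $\nu(G)\ge\nu(G-v)$, it is enough to find a vertex $v$ such that deleting it keeps the density condition and the triangles through $v$ can be covered by at most $2\cdot(\text{packing gain})$ edges. For example, if $v$ lies in no triangle, or its link graph has a matching of size $k$ that extends a packing of $G-v$, then we proceed. For this to work, the packing gain must be computed carefully: triangles at $v$ that are edge-disjoint from a maximum packing of $G-v$.

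Accordingly, the second step is structural: show that a minimal counterexample has minimum degree greater than roughly $n/2+c-1/4$. Otherwise some vertex of small degree exists. If $v$ has degree $\le n/2+c-1/4$, we cover triangles through $v$ using the edges $vx$ for $x$ in a vertex cover of the link $G[N(v)]$. We also pack triangles $vxy$ along a maximum matching of the link. By König-type bounds (vertex cover $\le 2\cdot$ matching), the cover costs at most twice the matching. The difficulty is that those triangles may clash with a maximum packing of $G-v$ on the edges $xy$. To handle this, first delete the matching edges $xy$ from $G-v$ before applying minimality. This loses at most $\nu'(v)\le d(v)/2$ edges, and the loss must be absorbed in the slack. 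This is where $c<2$ enters: deleting $v$ together with up to about $n/4$ link edges must still leave $e\ge (n-1)^2/4-c(n-1)$. In fact we will need a more economical variant, namely choosing the link matching inside edges not used elsewhere, or deleting two low-degree vertices at once.

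Third, once $\delta(G)>n/2+c-1/4$, the graph has many triangles. Every edge $xy$ has codegree at least $2\delta-n>2c-1/2$, and the edge count is only $O(n)$ above $n^2/4$, which forces near-bipartite structure. Concretely, $G$ has at most $O(n)$ edges inside a max cut $(A,B)$ with $|A|,|B|\approx n/2$. Let $M$ be the set of such inside edges, with $|M|=m$. Then $\tau(G)\le m$, since removing $M$ leaves a bipartite graph. So it suffices to find $m/2$ edge-disjoint triangles. Each inside edge $xy\subset A$ has many common neighbours in $B$, by the minimum degree and near-completeness of the bipartite part, since the number of missing cross edges is at most $cn+m$. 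Greedily assign to each inside edge a distinct apex in $B$ so that the triangles are edge-disjoint; the cross edges used are $2m$, drawn from a pool of roughly $n^2/4$. If $\Delta(G[A])$ is large, pair edges instead, needing only $\lceil m/2\rceil$ triangles. This gives $\nu\ge m/2\ge\tau/2$.

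I expect the main obstacle to be the second step: making the low-degree vertex deletion compatible with packings when the link edges are shared. The precise threshold $c<2$ should come out of the edge bookkeeping there. I would first try deleting a minimum-degree vertex together with a maximum matching of its link, and check that the edge loss is at most $n/2-1/4+c$ plus the number of matching edges, compared against the $c$-slack.
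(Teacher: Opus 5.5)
Your plan has genuine gaps. The most serious is that it never handles the regime $\nu(G)=\Theta(n^2)$. The hypothesis bounds $e(G)$ only from below, so the assertion in your third step that $e(G)$ is only $O(n)$ above $n^2/4$ is unavailable; $K_n$ already qualifies. Even with $e(G)\le n^2/4$ and high minimum degree, near-bipartiteness fails. Two disjoint copies of $K_{n/2}$ have $e=n^2/4-n/2$, so they satisfy the hypothesis for every $c\ge 1/2$. When $c>3/4$ they have no vertex of degree at most $n/2-c-1/4$ (the correct deletion threshold, see below). Yet every bipartition leaves about $n^2/8$ inside edges, and $\nu=\Theta(n^2)$. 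This is not a counterexample to the theorem, but it shows your step 3 says nothing there.

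That regime is genuinely hard: Krivelevich's bound together with Haxell--R\"odl only gives $\tau\le 2\nu+o(n^2)$. The paper's Case 1 removes the $o(n^2)$ error by analysing a minimum fractional cover $f$:
\begin{itemize}
\item Sorting edges by weight, it proves $2f(e_i)+f(e_{2\tau(G)+1-i})\ge 1$, using the threshold sets $S_s,Z_s$ and a max-cut argument.
\item It deduces that edges of weight at least $2/3$ are few, and edges of weight in $[1/6,2/3)$ number $\tau(G)+O(10^{-18}n^2)$.
\item It observes that edges of weight below $1/6$ are triangle-independent once the heavy edges are removed, so Lemma~\ref{lem:triangle-independent} yields a cover smaller than $\tau(G)$ by $\Omega(n^2)$.
\end{itemize}
The intermediate range $n^{2/3}<\nu(G)<10^{-5}n^2$ also needs its own argument. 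The paper applies stability to $G$ minus a maximum packing and uses exchange arguments on that packing to get a bipartition with fewer than $2\nu(G)$ edges inside the parts. Your step 3 presupposes this structure rather than proving it.

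Your second step also does not close, and the obstruction is not edge bookkeeping. Let $M$ be a maximum matching and $K$ a minimum vertex cover of the link of $v$, so $|K|\le 2|M|$, and let $G_1=G-v-M$. Then $\nu(G)\ge\nu(G_1)+|M|$, but $\tau(G)\le |K|+|M|+\tau(G_1)$. So minimality gives only $\tau(G)\le 2\nu(G)+|M|$. This deficit is in $\tau-2\nu$, so no slack in $e(G)$ can absorb it. If you do not delete $M$, the gain $\nu(G)-\nu(G-v)$ may be $0$. Non-bipartite links (e.g.\ $v$ in a $K_4$) are exactly the local obstruction in Tuza's conjecture.

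There are further problems with the reduction:
\begin{itemize}
\item $e(G-v)\ge (n-1)^2/4-c(n-1)$ requires $d(v)\le n/2-c-1/4$; the sign of $c$ in your display is wrong. With the correct sign, the codegree bound $2\delta-n$ you invoke in step 3 is at best a constant, and vacuous for $c\ge 0$.
\item A minimal counterexample among orders $n\ge n_0$ may have order exactly $n_0$, where no deletion is allowed, so the descent has no base.
\end{itemize}

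Finally, $c<2$ does not enter through vertex deletion. In the paper's Case 3 ($\nu\le n^{2/3}$), one first passes via Lemma~\ref{lem:peeling} to a subgraph $G'$ in which every edge has codegree at most $2\nu(G')$, with $\tau(G)\le \nu(G)-\nu(G')+\tau(G')$. Then Lemma~\ref{lem:witness} shows that each vertex of the minimum vertex covers $S_1,S_2$ of the two sides of a near-balanced bipartition forces about $n/2$ missing cross pairs. Hence $|S_1|+|S_2|\ge 4$ would contradict $e(G)\ge n^2/4-cn$. The remaining configuration, bipartite after deleting at most three vertices, is settled by Lemma~\ref{lem:three-vertices}, which relies on the $4$-colourable theorem and on Haxell--Kohayakawa. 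Your greedy apex assignment assumes every inside edge has many common neighbours across the cut, which fails precisely at these exceptional vertices.
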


\noindent
\textbf{Notation.}
Throughout the paper, all graphs are finite, simple, and undirected. For a graph $G$, we write $V(G)$ and $E(G)$ for its vertex and edge sets, and $e(G)=|E(G)|$. For $v\in V(G)$, let $N_G(v)$ and $d_G(v)$ denote its neighborhood and degree. For $U\subseteq V(G)$, let $G[U]$ denote the subgraph of $G$ induced by $U$, and write $G-U=G[V(G)\setminus U]$. If $F\subseteq E(G)$, then $G-F$ denotes the graph obtained from $G$ by deleting the edges in $F$. For $U\subseteq V(G)$, let $e_G(U)=e(G[U])$, and for disjoint sets $U,W\subseteq V(G)$, let $e_G(U,W)$ denote the number of edges with one endpoint in $U$ and the other in $W$. If $S\subseteq E(G)$, then $e_S(U)$ and $e_S(U,W)$ have the analogous meanings with only the edges of $S$ counted.

\section{Key lemmas}\label{sec:key}

In this section, we present several lemmas that are useful in the proof of our main result.

Recall that a \emph{fractional triangle packing} of a graph $G$ is a function $x$ from the set of triangles of $G$ to $[0,1]$ such that $\sum_{T\ni e}x(T)\le 1$ for every $e\in E(G)$. The maximum value of $\sum_T x(T)$ is denoted by $\nu^*(G)$. A \emph{fractional triangle cover} of a graph $G$ is a function $f:E(G)\to\mathbb R_{\ge0}$ such that $\sum_{e\in E(T)}f(e)\ge1$ for every triangle $T$ of $G$. The minimum value of $\sum_{e\in E(G)}f(e)$ over all fractional triangle covers of $G$ is denoted by $\tau^{*}(G)$. By linear programming duality, $\tau^{*}(G)=\nu^{*}(G)$; see~\cite[Section~2]{Yus12}.

\begin{lemma}[Haxell--R\"odl~\cite{HR01}]\label{lem:HR}
For every $\eta>0$ and all sufficiently large $n$, every $n$-vertex
graph $G$ satisfies $0\le\nu^*(G)-\nu(G)\le\eta n^2$.
\end{lemma}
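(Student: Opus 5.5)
Lemma~\ref{lem:HR} is quoted from Haxell and R\"odl~\cite{HR01} and is used as a black box, but here is how I would prove it. The lower bound $\nu^*(G)\ge\nu(G)$ is trivial, since a packing is a $0/1$ fractional packing. For the upper bound I would use the Szemer\'edi regularity lemma together with the R\"odl nibble (Frankl--R\"odl / Pippenger--Spencer). The strategy is to compare $\nu^*(G)$ with a weighted fractional problem on the reduced graph, and then realise that reduced-graph solution by an almost-perfect integral packing inside regular triples.

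\emph{Step 1: cleaning.} Fix $\varepsilon\ll d\ll\eta$ and apply the regularity lemma to get an equipartition $V_1,\dots,V_k$ with $k\le K(\varepsilon)$. Let $G'$ be obtained by deleting:
\begin{itemize}
\item edges inside clusters,
\item edges in irregular pairs,
\item edges in pairs of density below $d$.
\end{itemize}
At most $(\varepsilon+d+1/k)n^2$ edges are removed. Each removed edge lowers $\nu^*$ by at most $1$, because the triangles through it carry total weight at most $1$. So $\nu^*(G')\ge\nu^*(G)-\tfrac{\eta}{2}n^2$, and it suffices to show $\nu(G')\ge\nu^*(G')-\tfrac{\eta}{2}n^2$.

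\emph{Step 2: upper bound via the reduced graph.} Let $R$ be the reduced graph on $[k]$, and give each edge $ij$ the capacity $d_{ij}\,(n/k)^2$, where $d_{ij}$ is the density of $(V_i,V_j)$ in $G'$. Let $\nu^*_w(R)$ be the weighted fractional triangle packing number of $R$ under these capacities. Every triangle of $G'$ lies in a triple $V_i,V_j,V_l$ with $ijl$ a triangle of $R$. Take an optimal fractional cover $g$ of the weighted LP on $R$, normalised by its capacities, and put value $g(ij)$ on every edge of $G'$ between $V_i$ and $V_j$. This gives a fractional triangle cover of $G'$. By the duality $\tau^*=\nu^*$ noted before the lemma, $\nu^*(G')\le\nu^*_w(R)$.

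\emph{Step 3: integral realisation.} Take an optimal weighted fractional packing $y$ on $R$. For each edge $ij$ of $R$, split the edges of $(V_i,V_j)$ at random, sending each edge to the triangle $ijl\ni ij$ with probability proportional to $y(ijl)$. By concentration, each resulting piece is still $\varepsilon'$-regular, and its density is the appropriate share of $d_{ij}$. Triangle classes with tiny weight, of total contribution $o(n^2)$, are discarded. For each triangle $ijl$ of $R$ this yields a regular tripartite graph. After a further standard trimming that balances the three densities, it contains about $y(ijl)$ edge-disjoint triangles' worth of edges in every pair. In that graph, the $3$-uniform hypergraph whose vertices are edges and whose hyperedges are triangles is almost regular, with degrees about $d^2n/k$ and codegrees at most $1$. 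The Frankl--R\"odl theorem then gives a packing of triangles covering all but an $o(1)$-fraction of the edges. Summing over the triangles of $R$ gives $\nu(G')\ge\nu^*_w(R)-\tfrac{\eta}{2}n^2$, which combined with Steps~1 and~2 completes the argument.

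\emph{Main obstacle.} I expect the main difficulty to be Step~3. The random splitting must preserve regularity, and it must leave each triple almost degree-regular in the edge--triangle hypergraph, because the nibble needs near-regular degrees and not just good densities. Unequal pair densities $d_{ij},d_{jl},d_{il}$ inside a triple are what break degree regularity. My first attempt would be to delete a random subset of edges from the denser pairs until the three densities match, and then remove the $O(\varepsilon n^2)$ edges of atypical degree. Both operations cost only $o(n^2)$.
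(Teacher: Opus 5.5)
The paper does not prove this lemma; it quotes it from Haxell--R\"odl. Your outline (regularity, cleaning, a weighted LP on the reduced graph with the dual cover lifted to $G'$, random splitting of pairs, then the nibble) follows the architecture of their argument. Steps~1 and~2 are correct.

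\textbf{The gap is in Step~3: piece densities fall below the regularity parameter.} Send each edge of $(V_i,V_j)$ to class $ijl$ with probability $y(ijl)/c_{ij}$, where $c_{ij}=d_{ij}(n/k)^2$. Then all three pieces of class $ijl$ have the same density $p=y(ijl)k^2/n^2$. So nothing needs balancing, and the edge--triangle degrees are about $p^2n/k$, not $d^2n/k$.

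Since a pair is shared by up to $k-2$ classes and $k$ is only bounded by $K(\varepsilon)$, typically $p\ll\varepsilon$. For example, if $R$ is complete with equal densities, the symmetric optimum gives every class $p\approx d_{ij}/(k-2)$.

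What the random split inherits is regularity at threshold $\varepsilon n/k$, with density error $O(\varepsilon/d)$ relative to $p$. This says nothing about vertex neighbourhoods, which have size about $pn/k<\varepsilon n/k$. It therefore yields neither the typical codegree $p^2n/k$ nor any uniform upper bound on codegrees. The counting behind ``almost regular'' needs the regularity parameter small compared with $p$.

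Discarding classes with $p$ below a multiple of $\varepsilon$ does not rescue this. Over up to $\binom k3$ classes, their total weight is only bounded by order $\varepsilon k n^2$, which is not small. A genuinely negligible threshold must be of order $\eta/K(\varepsilon)$, far below $\varepsilon$.

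\textbf{You need one more idea; either of the following works.}

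\emph{(i) Use a basic optimal solution.} The LP has only $e(R)\le\binom k2$ capacity constraints, so a basic optimal $y$ gives positive weight to at most $\binom k2$ triangles of $R$. Discarding those with $y(ijl)<\theta(n/k)^2$ costs at most $\theta n^2/2$. The surviving pieces then have density at least $\theta$. With $\varepsilon$ chosen small in terms of $\theta$ and $d$, the regularity of the pieces does give most degrees $(1\pm\gamma)D$ and all degrees at most $D/\theta^2$.

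\emph{(ii) Bypass the regularity of the pieces.} For $u\in V_i$ and $v\in V_j$, the set $N_{G'}(u)\cap N_{G'}(v)\cap V_l$ is controlled by regularity of the unsplit pairs of $G'$, because for typical $u$ we have $|N_{G'}(u)\cap V_l|\ge(d-\varepsilon)n/k\gg\varepsilon n/k$. The number of $w$ in this set with $uw$ and $vw$ both sent to class $ijl$ is a sum of independent indicators. After discarding classes below the $\eta/K(\varepsilon)$ threshold, Chernoff gives two bounds. First, the degree is $(1\pm O(\varepsilon/d))p^2n/k$ for all but an $O(\varepsilon/d)$-fraction of piece edges. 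Second, it is at most $(1+o(1))p^2n/(d^2k)$ for every piece edge.

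Either way, you obtain exactly the hypotheses of the Frankl--R\"odl theorem: most degrees $(1\pm\gamma)D$, all degrees $O(D)$, and codegrees at most $1$. So you should not delete the atypical edges, since that would disturb the degrees of their neighbours.
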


A graph $G$ is \emph{$4$-colorable} if there exists a map
$\phi:V(G)\to\{1,2,3,4\}$ such that
$\phi(x)\ne\phi(y)$ for every $xy\in E(G)$.

\begin{lemma}[Aparna Lakshmanan--Bujt\'as--Tuza~\cite{ABT12}]
\label{lem:four-colorable}
Every $4$-colorable graph $G$ satisfies
$\tau(G)\le2\nu(G)$.
\end{lemma}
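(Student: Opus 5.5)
The plan is to prove the stronger-looking statement for $4$-partite graphs directly, using the structure that the colouring imposes on triangles. Fix a proper colouring $\phi$ with classes $V_1,\dots,V_4$, and write $E_{ij}$ for the set of edges between $V_i$ and $V_j$. Every triangle uses three distinct colour classes, say $\{i,j,k\}$, and so it has exactly one edge in each of $E_{ij},E_{ik},E_{jk}$. Group the six classes into the three ``perfect matchings'' of $K_4$:
\[
M_1=E_{12}\cup E_{34},\qquad M_2=E_{13}\cup E_{24},\qquad M_3=E_{14}\cup E_{23}.
\]
Then every triangle has exactly one edge in each of $M_1,M_2,M_3$. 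In particular each $M_t$ is by itself a triangle cover. This is the basic structural fact I will exploit.

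\textbf{Step 1 (reduction to a minimal counterexample).} Take a counterexample $G$ with $e(G)$ minimum, together with a fixed $4$-colouring. We may delete any edge lying in no triangle, so every edge of $G$ lies in a triangle. Next, suppose some triangle $T$ has the property that $\nu(G-E(T))\le\nu(G)-1$ fails, or more usefully, that $G-E(T)$ satisfies the bound by minimality. Then
\[
\tau(G)\le \tau(G-E(T))+2
\]
whenever two edges of $T$ cover all triangles meeting $T$ outside $G-E(T)$. I would record the standard local reductions of this type: if two edges of $T$ suffice to destroy every triangle through $T$, then since $\nu(G-E(T))\le\nu(G)-1$ we get $\tau(G)\le 2\nu(G)$. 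So we may assume that, for every triangle, all three of its edges are ``needed''.

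\textbf{Step 2 (main counting).} Let $\mathcal P$ be a maximum packing, so $|\mathcal P|=\nu$. Each $T\in\mathcal P$ has exactly one edge in each $M_t$. For $t\in\{1,2,3\}$ let
\[
C_t=E(\mathcal P)\setminus M_t,
\]
which has exactly $2\nu$ edges. By maximality every triangle meets $E(\mathcal P)$. A triangle $S$ is left uncovered by $C_t$ only if $S\cap E(\mathcal P)$ consists of its unique $M_t$-edge. In that case the other two edges of $S$, one in each of the two remaining classes, lie outside $\mathcal P$. I will call such $S$ \emph{$t$-bad} and aim to show that for some $t$ there is no $t$-bad triangle, or that the bad triangles can be absorbed by local swaps. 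Specifically, if $S$ is $t$-bad and shares its $M_t$-edge with $T\in\mathcal P$, then $(\mathcal P\setminus\{T\})\cup\{S\}$ is again a maximum packing. Moreover, the structure of the $4$-colouring strongly restricts how $S$ and $T$ sit: they span four vertices in at most four colour classes. So I would iterate switches, using a potential function (for example the number of $t$-bad triangles summed over $t$) together with the minimality from Step 1. The goal is to reach a maximum packing for which some $C_t$, possibly modified by exchanging one edge per packing triangle, is a cover of size $2\nu$.

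\textbf{Main obstacle.} The delicate step is Step 2: proving that the bad triangles for all three choices of $t$ cannot coexist in an irreducible configuration. I expect to have to do a case analysis on a packing triangle $T$ on colours $\{i,j,k\}$ together with the triangles attached to its edges, whose fourth vertex lies either in the missing class $V_\ell$ or in one of $V_i,V_j,V_k$. The case analysis should show one of two things. Either a $K_4$-like local structure appears, in which case two edges of $T$ cover everything near $T$ and Step 1 applies. Or a swap strictly decreases the potential. The $4$-colourability is exactly what keeps this case analysis finite: there is no $K_5$, and every triangle meets each $M_t$ exactly once.
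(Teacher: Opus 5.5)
The proposal has a genuine gap. Your opening observation is the right one, but the proposal stops exactly where a proof would have to start. Step 2 is a hope rather than an argument, and the mechanism you describe for it already fails on $K_4$, which is $4$-colourable.

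Colour $a,b,c,d$ by $1,2,3,4$, so $M_1=\{ab,cd\}$, $M_2=\{ac,bd\}$, $M_3=\{ad,bc\}$. Every maximum packing is a single triangle $T$. Each of the other three triangles shares exactly one edge with $T$, and is $t$-bad for the class $t$ of that edge. So your potential equals $3$ for every maximum packing, and no swap can lower it.

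Every edge of $K_4$ lies in exactly two triangles, so your Step 1 reduction never applies. (Correctly stated, it only applies when some edge of $T$ lies in no other triangle; note also that $\nu(G-E(T))\le\nu(G)-1$ always holds, so that condition cannot "fail".) Your escape clause, that in a $K_4$-like configuration two edges of $T$ cover everything near $T$, is false: they miss the triangle through the third edge of $T$.

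Worse, the only triangle covers of $K_4$ of size $2$ are $M_1,M_2,M_3$, and each contains an edge outside $E(T)$. So for every maximum packing, an optimal cover must use non-packing edges. Your target, ``some $C_t$, possibly modified by exchanging one edge per packing triangle'', is therefore either false or, without a rule for which edges enter, just a restatement of the lemma.

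The missing idea is to use your structural fact globally. Every triangle has exactly one edge in each of $M_1,M_2,M_3$. Hence the hypergraph on vertex set $E(G)$, whose hyperedges are the edge sets of the triangles of $G$, is $3$-partite and $3$-uniform. Its matchings are precisely triangle packings, and its vertex covers are precisely triangle covers.

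Aharoni's theorem (Ryser's conjecture for $3$-partite $3$-uniform hypergraphs, Combinatorica 2001) says such a hypergraph has vertex-cover number at most twice its matching number. This gives $\tau(G)\le 2\nu(G)$ immediately, and in fact for every graph admitting a $3$-edge-colouring in which every triangle is rainbow. Aharoni's proof rests on the topological Hall-type theorem of Aharoni and Haxell, which suggests that a finite local swap analysis is unlikely to replace it.

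For the record, the paper does not prove this lemma at all; it imports it from Aparna Lakshmanan, Bujt\'as and Tuza.
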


A graph is \emph{tripartite} if its vertex set can be partitioned
into three independent sets.

\begin{theorem}[Haxell--Kohayakawa~\cite{HK98}]
\label{thm:tripartite}
There is a constant $\varepsilon>0.044$ such that every tripartite
graph $G$ satisfies $\tau(G)\le(2-\varepsilon)\nu(G)$.
\end{theorem}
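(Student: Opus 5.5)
The plan is to exploit the rigid structure of tripartite graphs through Kőnig's theorem, and then to beat the trivial bound $3\nu$ down to $(2-\varepsilon)\nu$ by a weighted averaging argument against a maximum packing. Let $V(G)=A\cup B\cup C$ be the tripartition. Every triangle has exactly one vertex in each part, hence exactly one edge in each of $E(A,B)$, $E(B,C)$, $E(A,C)$.

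\textbf{Step 1: covers from links.} For a vertex $a\in A$, let $L_a$ be the bipartite \emph{link graph} on $(N(a)\cap B)\cup(N(a)\cap C)$. Its edge set consists of the $B$--$C$ edges of $G$ inside $N(a)$, so triangles through $a$ correspond bijectively to edges of $L_a$. Covering all triangles through $a$ by edges incident to $a$ amounts to a vertex cover of $L_a$. By Kőnig's theorem this costs exactly $m_a:=\nu'(L_a)$, the matching number of $L_a$. Doing this at every vertex of $A$ covers every triangle, so
\[
\tau(G)\le \sum_{a\in A} m_a=:M_A,
\]
and symmetrically $\tau(G)\le M_B$ and $\tau(G)\le M_C$. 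Hence $\tau\le \min\{M_A,M_B,M_C\}\le \tfrac13(M_A+M_B+M_C)$.

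\textbf{Step 2: relating $M_A+M_B+M_C$ to $\nu$.} Fix a maximum packing $\mathcal P$ with $|\mathcal P|=\nu$, and maximum matchings $M(a)$ in each link. A matching edge $bc\in M(a)$ gives a triangle $abc$; the triangles coming from one vertex $a$ are edge-disjoint at $a$, but may share the edge $bc$ with triangles coming from other vertices of $A$.

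By maximality of $\mathcal P$, each such triangle $abc$ shares an edge with some triangle of $\mathcal P$. I would charge $abc$ to that edge of $\mathcal P$, choosing which edge is charged as follows: the edge $ab$ or $ac$ if possible, and otherwise $bc$. Edges at $a$ lie in at most one triangle of $M(a)$. A fixed edge $bc$ of $\mathcal P$ can receive charges from many $a$, but only from $a\in A$, and only within the count $M_A$. Each triangle of $\mathcal P$ has three edges, and each of these edges is charged a bounded number of times within each of $M_A,M_B,M_C$.

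This crude accounting gives $M_A+M_B+M_C\le 6\nu$ and hence $\tau\le 2\nu$. The gain of a factor $(1-\varepsilon/2)$ must come from showing that these charges cannot all be tight simultaneously.

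\textbf{Step 3: the improvement, which is the main obstacle.} I would classify triangles $T\in\mathcal P$ according to how many charges each edge of $T$ receives in each of the three sums. Suppose that for a constant fraction of $T\in\mathcal P$ the total charge is at most $6-\delta$; averaging then yields $\tau\le(2-\varepsilon)\nu$. Otherwise almost every triangle of $\mathcal P$ is \emph{saturated}, meaning each of its edges is hit in two of the three link matchings. I would then perform local exchanges in $\mathcal P$: replace a saturated $T$ by two edge-disjoint triangles through its vertices, obtained from the link matchings. This would contradict maximality unless the extra triangles collide with other packing triangles in a very constrained pattern.

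Analysing these exchange configurations, which are essentially $K_4$-like structures across the three parts, is where I expect the real work to lie. The constant $\varepsilon$ would come out of optimising a small linear program over the possible configuration types. My first attempt would be a discharging scheme that sends charge along shared $B$--$C$ edges, together with a check that no configuration is simultaneously saturated in all three link-matching sums.
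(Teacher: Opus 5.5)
The paper gives no proof of this theorem. It quotes the result from Haxell and Kohayakawa and uses it as a black box, so your argument has to stand on its own. It has a genuine gap before the part you flag as hard.

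Step~1 is fine: $\tau\le M_A$ follows from K\H{o}nig applied to each link. The accounting in Step~2 is wrong, however. As you observe yourself, a $B$--$C$ edge of $\mathcal P$ can be charged by unboundedly many $a\in A$ within $M_A$. So it is not charged a bounded number of times, and $M_A+M_B+M_C\le 6\nu$ fails.

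Worse, even $\min\{M_A,M_B,M_C\}$ is not $O(\nu)$. Take a triangle $a_0b_0c_0$ and attach $k$ pages to each of its edges:
\begin{itemize}
\item $a_1,\dots,a_k\in A$, each joined to $b_0$ and $c_0$;
\item $b_1,\dots,b_k\in B$, each joined to $a_0$ and $c_0$;
\item $c_1,\dots,c_k\in C$, each joined to $a_0$ and $b_0$.
\end{itemize}
This graph is connected and its tripartition is forced. It has $\nu=\tau=3$: the three edges of $a_0b_0c_0$ form a cover, and $a_1b_0c_0$, $a_0b_1c_0$, $a_0b_0c_1$ are edge-disjoint. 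Yet each page $a_i$ has $m_{a_i}=1$, and the link of $a_0$ has matching number $2$, so $M_A=M_B=M_C=k+2$. For $k\ge5$ this gives both $M_A+M_B+M_C>6\nu$ and $\min\{M_A,M_B,M_C\}>2\nu$. Hence no argument that covers all triangles by link vertex covers at a single part can give $\tau\le C\nu$ for any constant $C$, whether you then take the minimum or the average over the parts. In particular Steps~1--2 do not even yield $\tau\le 2\nu$, the baseline that Step~3 was meant to improve.

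The missing idea is a cover built around the maximum packing that mixes the three edge classes according to local structure. In the example you must take the three edges of $a_0b_0c_0$, and these lie in three different classes. The charging must also never make a single packing edge responsible for unboundedly many cover edges.

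Step~3 does not supply this; it is a programme rather than an argument:
\begin{itemize}
\item Your notion of a ``saturated'' triangle is defined relative to the invalid charging.
\item No exchange lemma is stated or proved. Replacing $T$ by two triangles contradicts maximality only if both avoid every other triangle of $\mathcal P$, and you do not establish this.
\item No linear program is written down, and nothing in the proposal produces the constant $0.044$.
\end{itemize}
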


An edge set $S\subseteq E(G)$ is \emph{triangle-independent} if no
triangle of $G$ contains two edges of $S$.

\begin{lemma}\label{lem:triangle-independent}
Let $G$ be a graph on $n\ge1$ vertices and let $S\subseteq E(G)$
be triangle-independent. Put $C=E(G)\setminus S$.
If $|C|\ge\beta n^2$, where $0<\beta < 1/2$, then
$\tau(G)\le |C|-\beta^3n^2/36$.
\end{lemma}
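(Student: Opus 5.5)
The plan is to pass from $C$ to a smaller triangle cover by deleting a well-chosen set $R\subseteq C$ of edges between two disjoint vertex sets. Since $S$ is triangle-independent, every triangle of $G$ has at least two edges in $C$, so $C$ is a triangle cover. The reduction I would use is the following.

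\emph{Claim.} Let $A,B\subseteq V(G)$ be disjoint sets such that no edge of $S$ lies inside $A$ and no edge of $S$ lies inside $B$. Then $C\setminus E_C(A,B)$ is a triangle cover, so $\tau(G)\le |C|-e_C(A,B)$.

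To check the claim, let $T$ be a triangle. If $T$ contains at most one edge of $R=E_C(A,B)$, then at least one of its other two edges lies in $C\setminus R$, because $T$ has at least two $C$-edges. If $T$ contains two edges of $R$, then its vertices meet both $A$ and $B$. Hence the third edge lies inside $A$ or inside $B$, so it is not in $S$ and not in $R$; it therefore lies in $C\setminus R$.

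It thus suffices to find disjoint $A,B$, each spanning no $S$-edge, with $e_C(A,B)\ge \beta^3n^2/36$.

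The natural source of $S$-free sets is neighborhoods. If $x,y\in N_S(w)$ and $xy\in S$, then $wxy$ is a triangle with three $S$-edges, which is impossible. Hence every $S$-neighborhood spans no $S$-edge. More generally, for any vertex $w$, an $S$-edge inside $N_G(w)$ forces both of its connecting edges to $w$ into $C$.

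I would split into two cases according to how many edges of $S$ lie in triangles. Edges of $S$ lying in no triangle can be discarded from consideration, since they never obstruct the claim's conclusion for covering purposes. To do this cleanly, I would first apply the claim's argument only to triangles, requiring only that $A$ and $B$ contain no $S$-edge lying in a triangle.

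\emph{Case 1: few such $S$-edges, say at most $\beta n^2/2$.} Take a uniformly random bipartition $V=A\cup B$, and delete from $A$ and from $B$ one endpoint of each triangle-$S$-edge inside it. The expected value of $e_C(A,B)$ is $|C|/2\ge \beta n^2/2$. The deletion costs at most $n$ $C$-edges per deleted vertex. To keep this loss small, I would instead add the bad $S$-edges to the cover, giving the bound $|C|-e_C(A,B)+e_{S'}(A)+e_{S'}(B)$, whose expectation is $|C|-|C|/2+|S'|/2$. Here $S'$ denotes the set of $S$-edges lying in triangles. I would tune the threshold so that this gives a gain of order $\beta n^2$.

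\emph{Case 2: many triangle-$S$-edges.} Then some vertex $w$ has large $S$-degree or lies in many triangles with an $S$-edge. I would take $B=N_S(w)$, which spans no $S$-edge, together with a set $A$ chosen inside the $C$-neighbourhood structure. Using a dependent-random-choice style averaging over $w$ (each $C$-edge $xy$ is counted through the vertices $w$ adjacent to both), I would locate $A$ of size about $\beta n$ with $e_C(A,B)$ of order $\beta^3 n^2$. Among all $S$-free sets, $S$-neighbourhoods of a second vertex are natural choices for $A$.

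The three factors of $\beta$ arise as follows: one from the density of $C$, one from the fraction of vertices of large $C$-degree, and one from the averaging over $w$. The constant $1/36$ comes from taking halves and thirds in the averaging.

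The main obstacle is Case 2. There I must produce two \emph{disjoint} $S$-free sets joined by many $C$-edges, using only the density of $C$ and the triangle-independence of $S$. Balancing the two cases with the right threshold, so that both yield $\beta^3n^2/36$, is where I expect the real work.
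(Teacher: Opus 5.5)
Your Claim is correct, and your $S'$-weighted variant is a slight refinement of the paper's bound $\tau(G)\le |C|-e_C(A,B)+e_S(A)+e_S(B)$. Your Case 1 also works: if $|S'|\le\beta n^2/2$, a uniform random bipartition gains $(|C|-|S'|)/2\ge\beta n^2/4$.

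\textbf{The gap is Case 2.} You do not carry it out, and the route you sketch cannot work, because a global count of $S$-edges says nothing about where the $C$-edges are. For example, take disjoint sets $P,Q,U$ and a vertex $z$, with $|P|=|Q|=n/4$ and $\beta=1/100$. Let $S=E(P,Q)$ be complete bipartite. Join $z$ by $C$-edges to all of $P\cup Q$, and put a $C$-graph with about $\beta n^2$ edges inside $U$.
\begin{itemize}
\item $S$ is triangle-independent, and $|S'|=n^2/16>\beta n^2/2$, so you are in Case 2.
\item Every vertex of large $S$-degree has $S$-neighbourhood $P$ or $Q$, and each of these meets only $n/4$ edges of $C$. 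So no choice of $A$ gives $e_C(A,N_S(w))=\Omega(n^2)$.
\item A global random bipartition has negative expected gain, since $|S'|>|C|$.
\end{itemize}
What works here is a random bipartition of $U$ alone, that is, your Case 1 applied locally. No global dichotomy captures this.

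\textbf{The missing idea is localisation by iterated peeling.}
\begin{itemize}
\item First, $N_S(w)$ is independent in $G$, not merely $S$-free: any edge $xy$ inside it gives a triangle $wxy$ with two $S$-edges.
\item Repeatedly remove from the current set $W_i$ the $S$-neighbourhood, within $W_i$, of a vertex whose $S$-degree there is at least $\beta n/2$. This yields at most $2/\beta$ disjoint sets $A_1,\dots,A_m$, each independent in $G$, and a remainder $W$ with $e_S(W)<\beta n^2/4$.
\item Independence of the $A_i$ ensures that every $C$-edge lies between two $A_i$'s, between some $A_i$ and $W$, or inside $W$. Hence one of these three classes carries at least $\beta n^2/3$ edges of $C$.
\end{itemize}
The three cases are then handled as follows.
\begin{itemize}
\item Between two $A_i$'s: pigeonhole over at most $2/\beta^2$ pairs and take $A=A_i$, $B=A_j$, which gains at least $\beta^3n^2/6$.
\item Inside $W$: apply your Case 1 to $W$, which gains at least $\beta n^2/24$.
\item Between $A_i$ and $W$: choose $i$ with $e_C(A_i,W)\ge\beta^2n^2/6$, and let $B$ be a random subset of $W$ of density $\beta/3$. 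The gain $\frac{\beta}{3}e_C(A_i,W)$ is linear in the density, while the cost $\frac{\beta^2}{9}e_S(W)$ is quadratic, so the net gain is at least $\beta^3n^2/36$.
\end{itemize}
This sparse sampling step, not a dependent-random-choice argument, is the source of the third factor of $\beta$ and of the constant $1/36$.
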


\begin{proof}
For disjoint sets $A,B\subseteq V(G)$, let $H$ be
the spanning subgraph of $G$ with
\[
E(H)=\bigl(S\setminus(E_G(A)\cup E_G(B))\bigr)
\cup\bigl(C\cap E_G(A,B)\bigr).
\]
Then
\[
E(H)\cap C=C\cap E_G(A,B),
\qquad E_H(A)=E_H(B)=\varnothing.
\]

Since $S$ is triangle-independent, every triangle of $H$
contains at least two edges of $C$, both joining $A$ and $B$.
The third edge must then lie in $E_H(A)\cup E_H(B)=\varnothing$,
a contradiction. Hence $H$ is triangle-free, and therefore
\begin{equation}\label{eq:key-retain}
\tau(G)\le |E(G)\setminus E(H)|
=|C|-e_C(A,B)+e_S(A)+e_S(B).
\end{equation}

Let $G_S=(V(G),S)$ and set $W_1=V(G)$.
Suppose that $W_i$ has been defined.
If there exists a vertex $v_i\in W_i$ with
$d_{G_S[W_i]}(v_i)\ge\beta n/2$,
let $A_i=N_{G_S[W_i]}(v_i)$ and
$W_{i+1}=W_i\setminus A_i$.
Otherwise, stop the process.
Suppose that the process produces $A_1,\ldots,A_m$,
and let $W=W_{m+1}$.
Since the sets $A_i$ are pairwise disjoint and
$|A_i|\ge\beta n/2$, we have $m\le2/\beta$.
Moreover, every $A_i$ is independent in $G$.
Indeed, an edge $xy\in E_G(A_i)$ would form a triangle
$v_ixy$ with $v_ix,v_iy\in S$, contradicting the
triangle-independence of $S$.

By the stopping rule,
$d_{G_S[W]}(v)<\beta n/2$ for every $v\in W$.
Hence
\[
e_S(W)=\frac12\sum_{v\in W}d_{G_S[W]}(v)
<\frac{\beta n|W|}{4}
\le\frac{\beta n^2}{4}.
\]
Since $V(G)=A_1\cup\cdots\cup A_m\cup W$
and each $A_i$ is independent in $G$, we have
\[
|C|=\sum_{1\le i<j\le m}e_C(A_i,A_j)
+\sum_{i=1}^m e_C(A_i,W)+e_C(W).
\]
As $|C|\ge\beta n^2$, at least one of
$\sum_{1\le i<j\le m}e_C(A_i,A_j)$,
$\sum_{i=1}^m e_C(A_i,W)$, and $e_C(W)$
is at least $\beta n^2/3$.

We now consider the three cases above.
If
$
\sum_{1\le i<j\le m} e_C(A_i,A_j)\ge \frac{\beta n^2}{3},
$
then, since $\binom{m}{2}\le m^2/2\le2/\beta^2$, there exist
$1\le i<j\le m$ such that
$
e_C(A_i,A_j)\ge\frac{\beta^3}{6}n^2.
$
Since both $A_i$ and $A_j$ are independent in $G$,
taking $A=A_i$ and $B=A_j$ in~\eqref{eq:key-retain}
gives the desired bound.

If $\sum_{i=1}^m e_C(A_i,W)\ge\beta n^2/3$,
then $m\le2/\beta$ implies that some $1\le i\le m$
satisfies $e_C(A_i,W)\ge\beta^2n^2/6$.
Set $A=A_i$, and choose $B\subseteq W$ by including
each vertex independently with probability $\beta/3$.
Since $A$ is independent in $G$, we have $e_S(A)=0$.
By linearity of expectation,
\[
\begin{aligned}
\mathbb E[e_C(A,B)-e_S(A)-e_S(B)]
&=\frac{\beta}{3}e_C(A,W)
-\frac{\beta^2}{9}e_S(W)\\
&\ge\frac{\beta}{3}\cdot\frac{\beta^2n^2}{6}
-\frac{\beta^2}{9}\cdot\frac{\beta n^2}{4}\\
&=\frac{\beta^3n^2}{36}.
\end{aligned}
\]
Thus some choice of $B$ satisfies
$e_C(A,B)-e_S(A)-e_S(B)\ge\beta^3n^2/36$.
The desired bound follows from~\eqref{eq:key-retain}.

Finally, suppose that $e_C(W)\ge \beta n^2/3$. Partition $W$ randomly into $A$ and $B$, assigning each vertex independently to either part with probability $1/2$. Then $\mathbb E\,e_C(A,B)=e_C(W)/2$ and $\mathbb E[e_S(A)+e_S(B)]=e_S(W)/2$. Hence
\[
\begin{aligned}
\mathbb E\bigl[e_C(A,B)-e_S(A)-e_S(B)\bigr]
=\frac12\bigl(e_C(W)-e_S(W)\bigr)\ge \frac{\beta n^2}{24}
\ge \frac{\beta^3 n^2}{36},
\end{aligned}
\]
where the last inequality follows from $0<\beta<1/2$. Therefore some partition $W=A\cup B$ gives the required bound in~\eqref{eq:key-retain}.
\end{proof}

\begin{lemma}\label{lem:peeling}
Let $G$ be a graph and let $uv\in E(G)$. If
$|N_G(u)\cap N_G(v)|>2\nu(G)$, then
$\nu(G-uv)=\nu(G)-1$ and
$\tau(G)\le1+\tau(G-uv)$.
\end{lemma}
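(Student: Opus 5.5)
Write $k=|N_G(u)\cap N_G(v)|>2\nu(G)$ and $\nu=\nu(G)$. The plan is to prove both claims from one observation: \emph{every} maximum triangle packing of $G$ uses the edge $uv$.

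\textbf{Step 1: $uv$ lies in every maximum packing.} Let $\mathcal P$ be a triangle packing of $G$ of size $\nu$ that avoids $uv$. For each common neighbour $w$, consider the triangle $uvw$. Its edges other than $uv$ are $uw$ and $vw$. Each triangle of $\mathcal P$ has three edges, and at most two of them are incident to $u$ or to $v$ in the relevant way. So each triangle of $\mathcal P$ contains at most two edges of the form $uw$ or $vw$ with $w$ a common neighbour. Such an edge blocks exactly one triangle $uvw$. Hence $\mathcal P$ blocks at most $2\nu<k$ of the triangles $uvw$. Some $uvw$ is then edge-disjoint from $\mathcal P$, and $\mathcal P\cup\{uvw\}$ is a packing of size $\nu+1$, a contradiction.

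\textbf{Step 2: $\nu(G-uv)=\nu-1$.} Deleting one edge destroys at most one triangle of any packing, so $\nu(G-uv)\ge\nu-1$. Conversely, $\nu(G-uv)\le\nu$, and a packing of size $\nu$ in $G-uv$ would be a maximum packing of $G$ avoiding $uv$, which contradicts Step 1. Hence $\nu(G-uv)=\nu-1$.

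\textbf{Step 3: $\tau(G)\le 1+\tau(G-uv)$.} This part is easy. If $F$ is a minimum triangle cover of $G-uv$, then every triangle of $G$ either contains $uv$ or is a triangle of $G-uv$. Therefore $F\cup\{uv\}$ covers all triangles of $G$, which gives the bound.

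The only real step is the counting in Step 1. There I have to check carefully that one triangle of $\mathcal P$ kills at most two of the triangles $uvw$. A triangle containing both $uw$ and $vw$ with $uv\notin\mathcal P$ must be $uvw$ itself, which uses $uv$, so that case cannot occur. If a triangle of $\mathcal P$ contains $uw$ and $uw'$, it is $uww'$ and kills exactly two of the $uvw$. So each triangle of $\mathcal P$ kills at most two, and the bound $2\nu$ holds.
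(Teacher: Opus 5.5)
Your proof is correct and is essentially the paper's argument. The paper takes a maximum packing of $G-uv$ directly, notes that its at most $2\nu(G-uv)\le 2\nu(G)$ edges at $u$ or $v$ leave some triangle $uvw$ free, and adds it to get $\nu(G)\ge\nu(G-uv)+1$; this is your Step 1 phrased without contradiction, and the reverse inequality and the cover bound are handled as in your Steps 2 and 3.
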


\begin{proof}
Let $\mathcal P$ be a maximum triangle packing of $G-uv$.
Since $uv\notin E(G-uv)$, every triangle of $\mathcal P$
contains at most one of $u$ and $v$.
Thus the triangles of $\mathcal P$ use at most
$2|\mathcal P|=2\nu(G-uv)\le2\nu(G)$
edges incident with $u$ or $v$.
Since $|N_G(u)\cap N_G(v)|>2\nu(G)$,
there exists $w\in N_G(u)\cap N_G(v)$ such that
neither $uw$ nor $vw$ belongs to any triangle of $\mathcal P$.
Hence $\mathcal P\cup\{uvw\}$ is a triangle packing of $G$,
giving $\nu(G)\ge\nu(G-uv)+1$.
Conversely, deleting the triangle containing $uv$, if any,
from a maximum triangle packing of $G$ gives
$\nu(G-uv)\ge\nu(G)-1$.
Therefore $\nu(G-uv)=\nu(G)-1$.
Finally, adding $uv$ to a minimum triangle cover of $G-uv$
gives a triangle cover of $G$.
Hence $\tau(G)\le1+\tau(G-uv)$.
\end{proof}

A \emph{vertex cover} of a graph $G$ is a set $X\subseteq V(G)$ such that every edge of $G$ has at least one endpoint in $X$.

\begin{lemma}\label{lem:witness}
Let $V_1$ be a vertex cover of a graph $G$. Then there exists
$V_2$ with $V_1\subseteq V_2\subseteq V(G)$ and
$|V_2|\le |V_1|^2+2|V_1|$ such that the minimum size
of a vertex cover of $G[V_2]$ equals that of $G$.
\end{lemma}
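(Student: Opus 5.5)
We build $V_2$ by adding a small ``witness'' set to $V_1$. Let $k=|V_1|$ and let $\beta(H)$ denote the minimum size of a vertex cover of $H$. Since $V_2\supseteq V_1$ and $V_1$ covers every edge of $G$, every edge of $G[V_2]$ is covered by $V_1$. Hence for any $V_2$ we have $\beta(G[V_2])\le\beta(G)$, and vertex covers of induced subgraphs never grow. So the whole task is to choose $V_2$ so that every vertex cover of $G[V_2]$ has size at least $\beta(G)$.

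Let $U=V(G)\setminus V_1$. This set is independent, because $V_1$ is a vertex cover. The idea is to keep, for each vertex $x\in V_1$, a limited number of neighbours in $U$. Concretely, for each $x\in V_1$ fix a set $W_x\subseteq N_G(x)\cap U$ with
\[
|W_x|=\min\{|N_G(x)\cap U|,\,k+1\},
\]
and put $V_2=V_1\cup\bigcup_{x\in V_1}W_x$. This gives $|V_2|\le k+k(k+1)=k^2+2k$, which is exactly the stated bound.

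It remains to show that every vertex cover $X$ of $G[V_2]$ with $|X|<\beta(G)\le k$ can be turned into a vertex cover of $G$ of the same size, which gives the contradiction. We may assume $|X|\le k$. Take $x\in V_1\setminus X$. Every edge from $x$ to $W_x$ must be covered, so $W_x\subseteq X$.

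\begin{itemize}
\item If $x$ has at least $k+1$ neighbours in $U$, then $|W_x|=k+1>|X|$, which is impossible. So this case cannot occur.
\item Otherwise $W_x=N_G(x)\cap U$, so all of $x$'s neighbours in $U$ lie in $X$.
\end{itemize}

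Edges of $G$ inside $V_1$ already lie in $G[V_2]$, so they are covered by $X$. Every remaining edge of $G$ has the form $xu$ with $x\in V_1$ and $u\in U$. If $x\in X$ the edge is covered. If $x\notin X$, the second case above gives $u\in W_x\subseteq X$, so again the edge is covered. Therefore $X$ is a vertex cover of $G$, contradicting $|X|<\beta(G)$.

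The main subtlety is choosing the threshold $k+1$. It has to be large enough to force every high-degree vertex of $V_1$ into any small cover, yet small enough to give the size bound $k^2+2k$. Since $\beta(G)\le k$, the value $k+1$ meets both requirements.
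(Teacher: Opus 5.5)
Your proof is correct and follows the paper's argument essentially verbatim: you use the same truncated neighbourhoods of size $\min\{|N_G(x)\setminus V_1|,|V_1|+1\}$, and the same observation that a vertex of $V_1$ missed by a cover of size at most $|V_1|$ forces all its kept neighbours into that cover, so its neighbourhood cannot have been truncated. One cosmetic slip: $\beta(G[V_2])\le\beta(G)$ holds because restricting any cover of $G$ to $V_2$ gives a cover of $G[V_2]$; the fact that $V_1$ covers $G[V_2]$ only gives $\beta(G[V_2])\le|V_1|$.
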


\begin{proof}
For each $x\in V_1$, choose a subset
$U_x\subseteq N_G(x)\setminus V_1$ with
$|U_x|=\min\{|N_G(x)\setminus V_1|,|V_1|+1\}$,
and set $V_2=V_1\cup\bigcup_{x\in V_1}U_x$.
Note that the sets $U_x$ need not be pairwise disjoint.
Nevertheless,
\[
|V_2|\le |V_1|+\sum_{x\in V_1}|U_x|
\le |V_1|^2+2|V_1|.
\]

Let $X$ be a minimum vertex cover of $G[V_2]$.
Since $V_1$ is a vertex cover of $G[V_2]$,
we have $|X|\le |V_1|$.
Suppose that $X$ is not a vertex cover of $G$.
Then there exists an edge $xy\in E(G)$ with
$x,y\notin X$.
Since $V_1$ is a vertex cover of $G$, we may assume
that $x\in V_1\subseteq V_2$.
As $X$ covers every edge of $G[V_2]$, we have
$y\notin V_2$.
Thus $y\in N_G(x)\setminus(V_1\cup U_x)$,
which implies $|U_x|=|V_1|+1$.
Since $x\notin X$ and $X$ covers every edge
of $G[V_2]$, we have $U_x\subseteq X$.
Hence $|X|\ge |V_1|+1$, a contradiction.
Therefore $X$ is also a vertex cover of $G$.
Since $G[V_2]$ is an induced subgraph of $G$,
the two graphs have the same minimum vertex-cover size.
\end{proof}

\begin{lemma}\label{lem:three-vertices}
Let $G$ be a graph. If $G-S$ is bipartite for some
$S\subseteq V(G)$ with $|S|\le3$, then
$\tau(G)\le2\nu(G)$.
\end{lemma}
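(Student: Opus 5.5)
The plan is to reduce to Lemma~\ref{lem:four-colorable} whenever possible, and to handle the one remaining configuration by an edge-deletion argument. Let $B=G-S$ with bipartition $(P,Q)$, so $P$ and $Q$ get colours $1$ and $2$. If $G[S]$ is not a triangle (in particular if $|S|\le2$), then $G[S]$ is bipartite. We colour it with colours $3,4$, giving a proper $4$-colouring of $G$, and Lemma~\ref{lem:four-colorable} gives $\tau(G)\le2\nu(G)$ at once. So the only case needing work is $S=\{x,y,z\}$ with $xyz$ a triangle of $G$.

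In that case I would argue by induction on $e(G)$, deleting one edge of the triangle $xyz$. For any $e\in\{xy,yz,zx\}$, the graph $G-e$ still has $(G-e)-S$ bipartite, and $(G-e)[S]$ is a path. So $G-e$ is $4$-colourable and $\tau(G-e)\le2\nu(G-e)$. Since $\tau(G)\le1+\tau(G-e)$, whenever $\nu(G-e)\le\nu(G)-1$ for some such $e$ we get
\[
\tau(G)\le1+2\nu(G)-2<2\nu(G).
\]
Hence we may assume $\nu(G-e)=\nu(G)$ for all three edges $e$ of $xyz$. That is, each of $xy,yz,zx$ is avoided by some maximum packing.

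For this residual case I would try a cheaper cover than ``one edge plus a cover of $G-e$''. Move $z$ into colour class~$1$ and delete the star $F=\{zp:p\in P\}$; then $G-F$ is properly $4$-coloured ($P\cup\{z\}$, $Q$, $\{x\}$, $\{y\}$). It then suffices to show that the edges of $F$ that actually lie in triangles can be charged against packing triangles that are destroyed. Every triangle through an edge $zp$ has its third vertex in $Q\cup\{x,y\}$. By symmetry (we could instead put $z$ into $Q$, or choose $x$ or $y$ as the vertex to move), I would pick the vertex and side that minimise the number of triangle-bearing star edges. I would then compare this number with the drop $\nu(G)-\nu(G-F)$, using an augmenting/exchange argument in the style of Lemma~\ref{lem:peeling}: if $z$ has many common neighbours with a vertex of $P$, packing triangles can be rerouted through $z$.

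The main obstacle is exactly this final comparison. One must show that if every edge of $xyz$ is inessential for $\nu$, then some choice of recoloured vertex and side costs at most $2(\nu(G)-\nu(G-F))$ extra edges. I would first attempt it by choosing the maximum packing of $G-xy$ and analysing how its triangles meet $x$, $y$ and $z$. If that fails, a fallback is a minimal-counterexample argument. Such a counterexample has no $4$-colourable edge-deletion that lowers $\nu$. Combined with the above, this should force $x$, $y$, $z$ to have small, structured neighbourhoods in $P\cup Q$, which can then be covered directly.
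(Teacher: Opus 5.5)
Your proposal has a genuine gap: the residual case, where each of $xy,yz,zx$ is avoided by some maximum packing, is never proved. Your reduction up to that point is correct. If $G[S]$ is not a triangle, $G$ is $4$-colourable and Lemma~\ref{lem:four-colorable} applies. If deleting some edge $e$ of $xyz$ lowers $\nu$, then $\tau(G)\le 1+2(\nu(G)-1)$.

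For the residual case, the star-deletion plan needs the inequality ``the number of triangle-bearing edges of $F$ is at most $2(\nu(G)-\nu(G-F))$'' for some choice of vertex and side. You give no argument for this, and you flag it yourself as the main obstacle. The minimal-counterexample fallback is likewise only a hope. The route stalls because Lemma~\ref{lem:four-colorable} gives only the non-strict bound $\tau\le 2\nu$. Every edge you pay for outside a $4$-colourable subgraph must then be matched by a drop in $\nu$, and nothing forces such a drop. Deleting one or two edges of $xyz$ cannot help either: a surviving edge inside $S$ still needs two colours, so you only ever get back to a $4$-colourable graph with no slack.

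The missing idea is to delete all three edges of $T=xyz$ and use a bound with slack. The resulting graph $H$ is tripartite with parts $S,P,Q$. Since $T$ is edge-disjoint from $H$, you get $\nu(G)\ge\nu(H)+1$, while $\tau(G)\le\tau(H)+3$.

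If $\nu(H)>0$, Theorem~\ref{thm:tripartite} (Haxell--Kohayakawa) gives $\tau(H)<2\nu(H)$. By integrality, $\tau(H)\le 2\nu(H)-1$, so $\tau(G)\le 2\nu(H)+2\le 2\nu(G)$.

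If $\nu(H)=0$, every triangle of $G$ meets $E(T)$, so $\tau(G)\le 3$ and only $\nu(G)=1$ needs attention. Then either two edges of $T$ already cover all triangles, or there are triangles $xya$, $yzb$, $zxc$ with $a,b,c\notin S$. These must pairwise share an edge, which forces $a=b=c=d$, giving a $K_4$ on $\{x,y,z,d\}$. Any triangle with a vertex outside this $K_4$ would be edge-disjoint from one of its four triangles, contradicting $\nu(G)=1$. So all triangles lie in the $K_4$, and two disjoint edges $xd,yz$ cover them.

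This is the paper's argument. Without the strict tripartite inequality, or some genuine substitute for it, your proposal does not close.
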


\begin{proof}
Write $V(G)\setminus S=X\cup Y$, where $X$ and $Y$
are independent in $G$.
If $G[S]$ is bipartite, then $G$ is $4$-colorable,
and the result follows from Lemma~\ref{lem:four-colorable}.
Otherwise, $|S|\le3$ implies that
$S=\{a,b,c\}$, where $T=abc$ is a triangle.
Let $H=G-E(T)$.
Since $S,X,Y$ are independent in $H$, the graph $H$
is tripartite.
Moreover, $\nu(G)\ge\nu(H)+1$ and
$\tau(G)\le\tau(H)+3$.

If $\nu(H)>0$, Theorem~\ref{thm:tripartite} gives
$\tau(H)<2\nu(H)$.
Since both parameters are integers, we obtain
\[
\tau(G)\le\tau(H)+3
\le2\nu(H)+2
\le2\nu(G).
\]
Now suppose that $\nu(H)=0$.
Then every triangle of $G$ contains an edge of $T$,
so $\tau(G)\le3$.
Thus we may assume that $\nu(G)=1$.
If two edges of $T$ form a triangle cover of $G$,
the result follows.
Otherwise, there exist triangles $abx$, $bcy$,
and $caz$, where $x,y,z\notin S$.
Since $\nu(G)=1$, any two triangles of $G$
share an edge, which forces $x=y=z=d$.
Thus $G$ contains a copy of $K_4$ on
$\{a,b,c,d\}$.
Any triangle containing a vertex outside this set
is edge-disjoint from at least one of the four
triangles of this $K_4$, contradicting $\nu(G)=1$.
Hence every triangle of $G$ lies in this $K_4$,
and $\{ad,bc\}$ is a triangle cover of $G$.
Therefore $\tau(G)\le2=2\nu(G)$.
\end{proof}

\section{Proof of the main theorem}\label{sec:main}

In this section, we prove Theorem~\ref{thm:main}.
Fix $c<2$, and let $G$ be an $n$-vertex graph with
$e(G)\ge n^2/4-cn$, where $n$ is sufficiently large.
In particular, we may assume that $n^{2/3}<10^{-5}n^2$.
If $\nu(G)=0$, then $G$ is triangle-free and
$\tau(G)=0$. Thus we may assume $\nu(G)>0$
and consider the following three cases.

\medskip
\noindent\textbf{Case 1: $\nu(G)\ge10^{-5}n^2$.}

Suppose, for a contradiction, that $\tau(G)>2\nu(G)$.
By Lemma~\ref{lem:HR}, we may assume that
\begin{equation}\label{eq:main-fractional-gap}
0\le\nu^*(G)-\nu(G)\le10^{-18}n^2.
\end{equation}

Let $f:E(G)\to[0,1]$ be a minimum fractional
triangle cover of $G$. By linear programming duality,
$\sum_{e\in E(G)}f(e)=\nu^*(G)$.
For each $s\in[0,1/3]$, let
$Z_s=\{e\in E(G):f(e)<s\}$ and
$S_s=\{e\in E(G):f(e)\ge(1-s)/2\}$.
Since $s\le(1-s)/2$, the sets $Z_s$ and $S_s$
are disjoint. Moreover, every triangle containing
an edge of $Z_s$ must also contain an edge of $S_s$.
Otherwise, such a triangle $T$ would satisfy
$\sum_{e\in E(T)}f(e)<s+2(1-s)/2=1$,
contrary to the covering condition.

Choose a partition $V(G)=V_1\cup V_2$ maximizing
$|E_G(V_1,V_2)\setminus(S_s\cup Z_s)|$.
By the standard maximum-cut bound, we have
\[
|E_G(V_1,V_2)\setminus(S_s\cup Z_s)|
\ge\frac12|E(G)\setminus(S_s\cup Z_s)|.
\]

Let $H$ be the spanning subgraph of $G$ with
\[
E(H)=Z_s\cup
\bigl(E_G(V_1,V_2)\setminus(S_s\cup Z_s)\bigr).
\]
Then $H$ is triangle-free. Indeed, any triangle
of $H$ containing an edge of $Z_s$ must also
contain an edge of $S_s$, whereas
$E(H)\cap S_s=\varnothing$.
Any other triangle of $H$ would have all its
edges between $V_1$ and $V_2$, which is impossible.
Since
$e(H)\ge |Z_s|+(e(G)-|S_s|-|Z_s|)/2$,
deleting $E(G)\setminus E(H)$ gives
\begin{equation}\label{tauG}
\tau(G)\le e(G)-e(H)
\le\frac{e(G)+|S_s|-|Z_s|}{2}.
\end{equation}

By the standard maximum-cut bound, we also have
$2\tau(G)\le e(G)$.
Order the edges as $e_1,\ldots,e_{e(G)}$ such that
$f(e_1)\ge\cdots\ge f(e_{e(G)})$.
Since every triangle contains an edge $e$ with
$f(e)\ge1/3$, the set
$\{e\in E(G):f(e)\ge1/3\}$ is a triangle cover.
Thus $f(e_{\tau(G)})\ge1/3$.

We claim that
\begin{equation}\label{f_i}
2f(e_i)+f(e_{2\tau(G)+1-i})\ge1
\qquad(1\le i\le\tau(G)).
\end{equation}
Otherwise, there exists $1\le j\le\tau(G)$ such that
$2f(e_j)+f(e_{2\tau(G)+1-j})<1$.
Since $f(e_j)\ge1/3$, we can choose
$s\in[0,1/3]$ satisfying
$f(e_{2\tau(G)+1-j})<s<1-2f(e_j)$.
The inequality $f(e_j)<(1-s)/2$ implies that
$|S_s|\le j-1$. Similarly,
$f(e_{2\tau(G)+1-j})<s$ implies that
$e(G)-|Z_s|\le2\tau(G)-j$.
By~\eqref{tauG},
\[
\tau(G)\le
\frac{(2\tau(G)-j)+(j-1)}{2}
=\tau(G)-\frac12,
\]
a contradiction. This proves~\eqref{f_i}.

Summing~\eqref{f_i} over $i=1,\ldots,\tau(G)$
and using $\sum_{e\in E(G)}f(e)=\nu^*(G)$,
we obtain
\[
\begin{aligned}
\nu^*(G)
&= \sum_{i=1}^{\tau(G)}f(e_i) +\sum_{i=\tau(G)+1}^{e(G)}f(e_i)\ge \frac{\tau(G)}{2}-\frac12\sum_{i=\tau(G)+1}^{2\tau(G)}f(e_i)+\sum_{i=\tau(G)+1}^{e(G)}f(e_i)\\
&=\frac{\tau(G)}{2}+\frac12\sum_{i=\tau(G)+1}^{2\tau(G)}f(e_i)+\sum_{i=2\tau(G)+1}^{e(G)}f(e_i)\ge \frac{\tau(G)}{2}+\frac12\sum_{i=\tau(G)+1}^{e(G)}f(e_i).
\end{aligned}
\]
Thus 
\begin{align}\label{sumtau}
\sum_{i=\tau(G)+1}^{e(G)}f(e_i)\le2(\nu^*(G)-\tau(G)/2). 
\end{align}
For each $1\le i\le\tau(G)$, we also have
\[
\min\{f(e_i),1/2\}
+\min\{f(e_{2\tau(G)+1-i}),1/2\}
\ge\frac12.
\]
Indeed, this is immediate if $f(e_i)\ge1/2$.
Otherwise, both weights are less than $1/2$,
and the inequality follows from~\eqref{f_i}.
Summing over $i=1,\ldots,\tau(G)$ gives
$\sum_{e\in E(G)}\min\{f(e),1/2\}\ge\tau(G)/2$.
Consequently,
\begin{equation}\label{sumhalf}
\begin{aligned}
\sum_{\substack{e\in E(G)\\f(e)>1/2}}
\left(f(e)-\frac12\right)
=\nu^*(G)-\sum_{e\in E(G)}
\min\{f(e),1/2\}\le\nu^*(G)-\frac{\tau(G)}{2}.
\end{aligned}
\end{equation}

Since $\tau(G)>2\nu(G)$,
\eqref{eq:main-fractional-gap} and~\eqref{sumhalf}
give
$0\le\nu^*(G)-\tau(G)/2<10^{-18}n^2$.

Set $L=\{e\in E(G):f(e)<1/6\}$,
$M=\{e\in E(G):1/6\le f(e)<2/3\}$,
and $F=\{e\in E(G):f(e)\ge2/3\}$.
Since $f(e_{\tau(G)})\ge1/3$, the first
$\tau(G)$ edges all belong to $F\cup M$.
Every remaining edge of $F\cup M$ has weight
at least $1/6$. Hence, by~\eqref{sumtau},
\[
\begin{aligned}
|F|+|M|-\tau(G)\le6\sum_{i=\tau(G)+1}^{e(G)}f(e_i)\le12\left(\nu^*(G)-\frac{\tau(G)}{2}\right)
<12\cdot10^{-18}n^2.
\end{aligned}
\]

Moreover, every edge $e\in F$ satisfies
$f(e)-1/2\ge1/6$. By~\eqref{sumhalf},
$|F|\le6(\nu^*(G)-\tau(G)/2)
<6\cdot10^{-18}n^2$.
Therefore
\[
|M|\ge\tau(G)-|F|
>2\nu(G)-|F|> 2\cdot10^{-5}n^2-6\cdot10^{-18}n^2
>10^{-5}n^2.
\]

We claim that $L$ is triangle-independent in $G-F$.
Otherwise, some triangle $T$ of $G-F$ contains
two edges of $L$, giving
$\sum_{e\in E(T)}f(e)<1/6+1/6+2/3=1$,
a contradiction.
Since $E(G-F)\setminus L=M$ and
$|M|>10^{-5}n^2$, applying
Lemma~\ref{lem:triangle-independent} with
$\beta=10^{-5}$ gives
$\tau(G-F)\le |M|-10^{-15}n^2/36$.
Hence
\[
\begin{aligned}
\tau(G)
&\le |F|+\tau(G-F)\le |F|+|M|-\frac{10^{-15}}{36}n^2\\
&<\tau(G)+
\left(12\cdot10^{-18}
-\frac{10^{-15}}{36}\right)n^2<\tau(G),
\end{aligned}
\]
a contradiction. This proves Case 1.

We now establish a decomposition of dense triangle-free graphs
that will be used in Cases 2 and 3.
Let $H$ be an $n$-vertex triangle-free graph with
$e(H)\ge(1/4-10^{-4})n^2$.
Choose a vertex $v$ of maximum degree $d$ in $H$, and set
$V_1=N_H(v)$ and $V_2=V(H)\setminus V_1$.
Since $H$ is triangle-free, $V_1$ is independent.
Moreover,
\[
e(H)+e_H(V_2)
=\sum_{u\in V_2}d_H(u)
\le d(n-d)\le\frac{n^2}{4}.
\]
Thus $e_H(V_2)\le n^2/4-e(H)$ and
$e_H(V_1,V_2)\ge2e(H)-n^2/4$.
Consequently,
\begin{equation}\label{|V_1|-eV_1}
\begin{aligned}
|V_1||V_2|-e_H(V_1,V_2)\le\frac{n^2}{4}-e_H(V_1,V_2)\le2\left(\frac{n^2}{4}-e(H)\right)
\le2\cdot10^{-4}n^2.
\end{aligned}
\end{equation}

Since $|V_1|+|V_2|=n$,
$|V_1||V_2|=n^2/4-(|V_1|-n/2)^2$.
Therefore
\begin{align}\label{V_1-n/2}
\left(|V_1|-\frac n2\right)^2
&=\frac{n^2}{4}-|V_1||V_2|
\le\frac{n^2}{4}-e_H(V_1,V_2)
\le2\left(\frac{n^2}{4}-e(H)\right)
\le2\cdot10^{-4}n^2.
\end{align}
Since $|V_2|-n/2=-(|V_1|-n/2)$,
\eqref{V_1-n/2} gives
\[
\left||V_1|-\frac n2\right|
=\left||V_2|-\frac n2\right|
\le\frac{\sqrt2\,n}{100}.
\]

Define
\[
\begin{aligned}
A={}&\{x\in V_1:|V_2\setminus N_H(x)|>n/5\}\cup\{x\in V_2:|V_1\setminus N_H(x)|>n/5\},
\end{aligned}
\]
and put $B=V_1\setminus A$ and $C=V_2\setminus A$.
By the definition of $A$, we have
\[
\begin{aligned}
\frac n5|A|
&\le \sum_{x\in A\cap V_1}|V_2\setminus N_H(x)|
+\sum_{x\in A\cap V_2}|V_1\setminus N_H(x)|\\
&\le \sum_{x\in V_1}|V_2\setminus N_H(x)|
+\sum_{x\in V_2}|V_1\setminus N_H(x)|\\
&=2\bigl(|V_1||V_2|-e_H(V_1,V_2)\bigr).
\end{aligned}
\]
Hence, by~\eqref{|V_1|-eV_1},
\begin{equation}\label{eq:main-A-bound}
|A|\le\frac{10}{n}
\bigl(|V_1||V_2|-e_H(V_1,V_2)\bigr)
\le\frac n{500}.
\end{equation}

Since $B=V_1\setminus A$ and $C=V_2\setminus A$,
\eqref{V_1-n/2} and~\eqref{eq:main-A-bound} give
\begin{equation}\label{eq:main-BC-balance}
\left||B|-\frac n2\right|,
\quad
\left||C|-\frac n2\right|
\le\left(\frac{\sqrt2}{100}+\frac1{500}\right)n.
\end{equation}
In particular, $|B|,|C|>9n/20$.

Since $B\subseteq V_1$ and $C\subseteq V_2$,
\eqref{|V_1|-eV_1} gives
\begin{equation}\label{eq:main-BC-missing}
\begin{aligned}
|B||C|-e_H(B,C)
&\le |V_1||V_2|-e_H(V_1,V_2)\le 2\left(\frac{n^2}{4}-e(H)\right)\le 2\cdot10^{-4}n^2.
\end{aligned}
\end{equation}

By the definition of $A$, we have
\[
\begin{aligned}
|C\setminus N_H(x)|
&\le |V_2\setminus N_H(x)|\le n/5
&& (x\in B),\\
|B\setminus N_H(x)|
&\le |V_1\setminus N_H(x)|\le n/5
&& (x\in C).
\end{aligned}
\]

Since $|B|,|C|>9n/20$, we obtain
\begin{equation}\label{eq:main-common}
\begin{aligned}
|N_H(x)\cap N_H(y)\cap C|
&\ge |C|-\frac{2n}{5}>\frac n{20}
&& (x,y\in B,\ x\ne y),\\
|N_H(x)\cap N_H(y)\cap B|
&\ge |B|-\frac{2n}{5}>\frac n{20}
&& (x,y\in C,\ x\ne y).
\end{aligned}
\end{equation}

Finally, $B$ is independent in $H$ since
$B\subseteq V_1$.
If $xy\in E_H(C)$, then~\eqref{eq:main-common}
implies that $x$ and $y$ have a common neighbor
$z\in B$. Thus $xyz$ is a triangle in $H$,
a contradiction. Therefore
\begin{equation}\label{BCindep}
E_H(B)=E_H(C)=\varnothing.
\end{equation}

\medskip
\noindent\textbf{Case 2: $n^{2/3}<\nu(G)<10^{-5}n^2$.}

Let $\mathcal P$ be a maximum triangle packing of $G$,
and let $H=G-\bigcup_{T\in\mathcal P}E(T)$.
By the maximality of $\mathcal P$, the graph $H$ is
triangle-free. Moreover,
\[
e(H)=e(G)-3\nu(G)
\ge\frac{n^2}{4}-cn-3\nu(G)
\ge\left(\frac14-10^{-4}\right)n^2,
\]
where the last inequality holds for sufficiently large
$n$, since $c<2$ and $\nu(G)<10^{-5}n^2$ imply
$cn+3\nu(G)<2n+3\cdot10^{-5}n^2\le10^{-4}n^2$.
We also have
\begin{equation}\label{eq:main-H-deficit}
\frac{n^2}{4}-e(H)
=\frac{n^2}{4}-e(G)+3\nu(G)
\le cn+3\nu(G).
\end{equation}
Apply the preceding construction to $H$ to obtain
$V_1,V_2,A,B,C$ satisfying
\eqref{eq:main-A-bound}, \eqref{eq:main-BC-missing},
\eqref{eq:main-common}, and~\eqref{BCindep}.

For each $x\in A$, define
\[
\begin{aligned}
L_x&=(N_H(x)\cap B)
\cup\{b\in B:xbb'\in\mathcal P
\text{ for some }b'\in B\},\\
R_x&=(N_H(x)\cap C)
\cup\{c\in C:xcc'\in\mathcal P
\text{ for some }c'\in C\}.
\end{aligned}
\]
We first claim that
\begin{equation}\label{eq:main-noLR}
e_H(L_x,R_x)=0
\qquad(x\in A).
\end{equation}

Suppose, to the contrary, that $bc\in E(H)$
for some $b\in L_x$ and $c\in R_x$.
If $xb,xc\in E(H)$, then $xbc$ is a triangle
of $H$, a contradiction.
Suppose next that exactly one of $xb,xc$
belongs to $E(H)$.
By symmetry, assume that $xb\in E(H)$.
Since $c\in R_x$ and $xc\notin E(H)$,
there exists $c'\in C$ such that
$xcc'\in\mathcal P$.
By~\eqref{eq:main-common}, we may choose
$z\in N_H(c)\cap N_H(c')\cap B$ with $z\ne b$.
Then
\[
(\mathcal P\setminus\{xcc'\})\cup\{xbc,zcc'\}
\]
is a triangle packing of $G$.
Indeed, the edges $xc,cc'$ belong to the removed
triangle, while $xb,bc,zc,zc'\in E(H)$.
Since $z\ne b$, the two new triangles are
edge-disjoint, and neither shares an edge with
any triangle of $\mathcal P\setminus\{xcc'\}$.
This gives a packing of size $|\mathcal P|+1$,
a contradiction.
It remains to consider $xb,xc\notin E(H)$.
By the definitions of $L_x$ and $R_x$,
there exist $b'\in B$ and $c'\in C$ such that
$xbb',xcc'\in\mathcal P$.
These two triangles are distinct since
$B\cap C=\varnothing$.
By~\eqref{eq:main-common}, we may choose
\[
\begin{aligned}
c_0&\in(N_H(b)\cap N_H(b')\cap C)
\setminus\{c,c'\},\\
b_0&\in(N_H(c)\cap N_H(c')\cap B)
\setminus\{b,b'\}.
\end{aligned}
\]
Consider
\[
(\mathcal P\setminus\{xbb',xcc'\})
\cup\{xbc,bb'c_0,b_0cc'\}.
\]
The edges $xb,xc,bb',cc'$ belong to the two
removed triangles, while
$bc,bc_0,b'c_0,b_0c,b_0c'\in E(H)$.
Since $b_0\notin\{b,b'\}$ and
$c_0\notin\{c,c'\}$, the three new triangles
are pairwise edge-disjoint.
Moreover, they share no edge with any triangle
of $\mathcal P\setminus\{xbb',xcc'\}$.
Thus we obtain a triangle packing of size
$|\mathcal P|+1$, again a contradiction.
This proves~\eqref{eq:main-noLR}.

We next claim that
\begin{equation}\label{eq:main-no-BC-triangle}
\text{no triangle of }\mathcal P
\text{ is contained in }B\text{ or }C.
\end{equation}
By symmetry, suppose that
$b_1b_2b_3\in\mathcal P$ with
$b_1,b_2,b_3\in B$.
By~\eqref{eq:main-common}, we may choose
distinct vertices $c_1,c_2\in C$ such that
$c_1\in N_H(b_1)\cap N_H(b_2)$ and
$c_2\in N_H(b_2)\cap N_H(b_3)$.
Then
\[
(\mathcal P\setminus\{b_1b_2b_3\})
\cup\{b_1b_2c_1,b_2b_3c_2\}
\]
is a triangle packing of size $|\mathcal P|+1$.
Indeed, $b_1b_2,b_2b_3$ belong to the removed
triangle, while the other four edges belong
to $H$. The two new triangles are edge-disjoint
since $c_1\ne c_2$.
This contradicts the maximality of $\mathcal P$
and proves~\eqref{eq:main-no-BC-triangle}.

Set $A_1=\{x\in A:|L_x|\le|R_x|\}$ and
$A_2=A\setminus A_1$, and let
$X=B\cup A_1$ and $Y=C\cup A_2$.
Since $L_x\subseteq B$, $R_x\subseteq C$,
and $e_H(L_x,R_x)=0$, we have
$|L_x||R_x|\le|B||C|-e_H(B,C)$
for every $x\in A$.
Consequently,
\begin{equation}\label{eq:main-LR-sum}
\begin{aligned}
\sum_{x\in A_1}|L_x|+\sum_{x\in A_2}|R_x|
&=\sum_{x\in A}\min\{|L_x|,|R_x|\}\le\sum_{x\in A}\sqrt{|L_x||R_x|}\\
&\le |A|\sqrt{|B||C|-e_H(B,C)}.
\end{aligned}
\end{equation}

Let
\[
\mathcal Q=\bigl\{T\in\mathcal P:
(V(T)\subseteq X\text{ or }V(T)\subseteq Y),
\ |V(T)\cap A|\ge2\bigr\}.
\]
Each triangle of $\mathcal P$ contributes one edge
to $e_G(X)+e_G(Y)$, and two additional edges
if all its vertices lie in $X$ or $Y$.
By~\eqref{eq:main-no-BC-triangle}, every triangle
lying entirely in one part but not belonging to
$\mathcal Q$ has one of the forms
$xbb'$ with $x\in A_1$ and $b,b'\in B$,
or $xcc'$ with $x\in A_2$ and $c,c'\in C$.
Since $\mathcal P$ is edge-disjoint and
$E(H)\cap E(T)=\varnothing$ for every
$T\in\mathcal P$, the definitions of $L_x,R_x$
give
\[
\begin{aligned}
|L_x|-|N_H(x)\cap B|
&=2|\{xbb'\in\mathcal P:b,b'\in B\}|,\\
|R_x|-|N_H(x)\cap C|
&=2|\{xcc'\in\mathcal P:c,c'\in C\}|
\end{aligned}
\]
for every $x\in A$.

By~\eqref{BCindep}, we also have
\[
\begin{aligned}
e_H(X)+e_H(Y)=e_H(A_1)+e_H(A_2)+\sum_{x\in A_1}|N_H(x)\cap B|
+\sum_{x\in A_2}|N_H(x)\cap C|.
\end{aligned}
\]
Combining these identities and using
$|\mathcal P|=\nu(G)$, we obtain
\[
\begin{aligned}
e_G(X)+e_G(Y)
&=e_H(X)+e_H(Y)+\nu(G)+2|\mathcal Q|+\sum_{x\in A_1}
\bigl(|L_x|-|N_H(x)\cap B|\bigr)\\
&\quad+\sum_{x\in A_2}
\bigl(|R_x|-|N_H(x)\cap C|\bigr)\\
&=\nu(G)+e_H(A_1)+e_H(A_2)+2|\mathcal Q|+\sum_{x\in A_1}|L_x|
+\sum_{x\in A_2}|R_x|.
\end{aligned}
\]

Every triangle of $\mathcal Q$ contains an edge
of $G[A_1]$ or $G[A_2]$.
These edges are distinct for different triangles
of $\mathcal Q$ and do not belong to $H$.
Therefore
\[
e_H(A_1)+e_H(A_2)+|\mathcal Q|
\le e_G(A_1)+e_G(A_2)
\le\binom{|A|}{2}.
\]

Deleting all edges inside $X$ and $Y$
leaves a bipartite graph.
Hence, by~\eqref{eq:main-LR-sum},
\[
\begin{aligned}
\tau(G)
&\le e_G(X)+e_G(Y)\le\nu(G)+|A|\sqrt{|B||C|-e_H(B,C)}
+2\binom{|A|}{2}\\
&\le\nu(G)+|A|\sqrt{|B||C|-e_H(B,C)}
+|A|^2.
\end{aligned}
\]
Using~\eqref{eq:main-A-bound} and
\eqref{eq:main-BC-missing}, we obtain
\[
\begin{aligned}
\tau(G)
&\le\nu(G)
+\frac{10}{n}
\bigl(|V_1||V_2|-e_H(V_1,V_2)\bigr)^{3/2}+\frac{100}{n^2}
\bigl(|V_1||V_2|-e_H(V_1,V_2)\bigr)^2\\
&\le\nu(G)
+\frac{20\sqrt2}{n}
\left(\frac{n^2}{4}-e(H)\right)^{3/2}+\frac{400}{n^2}
\left(\frac{n^2}{4}-e(H)\right)^2\\
&\le\nu(G)
+\frac{20\sqrt2}{n}
\bigl(cn+3\nu(G)\bigr)^{3/2}+\frac{400}{n^2}
\bigl(cn+3\nu(G)\bigr)^2,
\end{aligned}
\]
where the second and third inequalities follow
from~\eqref{|V_1|-eV_1} and
\eqref{eq:main-H-deficit}, respectively.

Since $c<2$, we have
$cn+3\nu(G)<2n+3\nu(G)$.
Using
$(a+b)^{3/2}\le\sqrt2(a^{3/2}+b^{3/2})$
and $(a+b)^2\le2(a^2+b^2)$ for $a,b\ge0$,
we obtain
\[
\begin{aligned}
\frac{\tau(G)-\nu(G)}{\nu(G)}
&\le\frac{20\sqrt2}{n\nu(G)}
(2n+3\nu(G))^{3/2}+\frac{400}{n^2\nu(G)}
(2n+3\nu(G))^2\\
&\le80\sqrt2\,\frac{\sqrt n}{\nu(G)}+120\sqrt3\,\frac{\sqrt{\nu(G)}}{n}
+\frac{3200}{\nu(G)}+7200\frac{\nu(G)}{n^2}\\
&<80\sqrt2\,n^{-1/6}+120\sqrt{3\cdot10^{-5}}+3200n^{-2/3}
+7200\cdot10^{-5}\\
&<\frac34+120n^{-1/6}+3200n^{-2/3}
<1,
\end{aligned}
\]
where the last two inequalities follow from
$n^{2/3}<\nu(G)<10^{-5}n^2$
and sufficiently large $n$.
Therefore $\tau(G)<2\nu(G)$.
This proves Case 2.

\medskip
\noindent\textbf{Case 3: $\nu(G)\le n^{2/3}$.}

Choose a spanning subgraph $G'\subseteq G$ with the fewest
edges subject to
\begin{equation}\label{eq:main-minimal-subgraph}
e(G)-e(G')=\nu(G)-\nu(G').
\end{equation}
Such a graph exists since $G$ itself satisfies this equality.
Suppose that some $uv\in E(G')$ satisfies
$|N_{G'}(u)\cap N_{G'}(v)|>2\nu(G')$.
By Lemma~\ref{lem:peeling},
$\nu(G'-uv)=\nu(G')-1$.
Since $e(G'-uv)=e(G')-1$, the graph $G'-uv$
also satisfies~\eqref{eq:main-minimal-subgraph},
contradicting the choice of $G'$.
Therefore
\begin{equation}\label{eq:main-bounded-common}
|N_{G'}(u)\cap N_{G'}(v)|
\le2\nu(G')
\qquad(uv\in E(G')).
\end{equation}
A triangle cover of $G'$, together with
$E(G)\setminus E(G')$, covers every triangle of $G$.
Hence
\begin{equation}\label{eq:main-cover-transfer}
\tau(G)\le e(G)-e(G')+\tau(G')
=\nu(G)-\nu(G')+\tau(G').
\end{equation}
If $\nu(G')=0$, then $\tau(G')=0$ and
$\tau(G)\le\nu(G)$. Thus we may assume $\nu(G')>0$.

Let $H$ be obtained from $G'$ by deleting the edges
of a maximum triangle packing. By maximality, $H$ is triangle-free.
By~\eqref{eq:main-minimal-subgraph}, we have
\begin{equation}\label{eq:main-H-lower}
\begin{aligned}
e(H)
&=e(G')-3\nu(G')\\
&=e(G)-\nu(G)-2\nu(G')\\
&\ge\frac{n^2}{4}-cn-3\nu(G)
\ge\frac{n^2}{4}-5n,
\end{aligned}
\end{equation}
where the last inequality follows from $c<2$ and
$\nu(G')\le\nu(G)\le n^{2/3}\le n$.
Since $5n\le10^{-4}n^2$ for sufficiently large $n$,
we may apply the decomposition preceding Case 2 to $H$,
retaining the notation $V_1,V_2,A,B,C$.

We first claim that $B$ and $C$ are independent in $G'$.
Otherwise, there exist distinct vertices $u,v\in B$
or $u,v\in C$ with $uv\in E(G')$.
By~\eqref{eq:main-bounded-common} and
\eqref{eq:main-common},
\[
2\nu(G')
\ge |N_{G'}(u)\cap N_{G'}(v)|
\ge |N_H(u)\cap N_H(v)|
>\frac n{20}>2\nu(G'),
\]
where the last inequality holds for sufficiently large
$n$, since $\nu(G')\le n^{2/3}$.
This contradiction proves that
$e_{G'}(B)=e_{G'}(C)=0$.

Since $H$ is triangle-free, for every $x\in A$,
we have
$e_H(N_H(x)\cap B,N_H(x)\cap C)=0$.
Since $N_H(x)\cap B\subseteq B\subseteq V_1$
and $N_H(x)\cap C\subseteq C\subseteq V_2$,
we obtain
\[
\begin{aligned}
|N_H(x)\cap B|\,|N_H(x)\cap C|
&\le |B||C|-e_H(B,C)\le |V_1||V_2|-e_H(V_1,V_2)\\
&\le 2\left(\frac{n^2}{4}-e(H)\right)\le 10n.
\end{aligned}
\]
The last two inequalities follow from
\eqref{|V_1|-eV_1} and
\eqref{eq:main-H-lower}, respectively.

Set
$A_1=\{x\in A:|N_H(x)\cap B|\le|N_H(x)\cap C|\}$
and $A_2=A\setminus A_1$.
Let $X=B\cup A_1$ and $Y=C\cup A_2$.
By~\eqref{eq:main-A-bound} and
\eqref{|V_1|-eV_1},
\[
|A|\le\frac{20}{n}
\left(\frac{n^2}{4}-e(H)\right)\le100,
\]
where the last inequality follows from
\eqref{eq:main-H-lower}.
Since $B$ and $C$ are independent in $H$,
we have
\[
\begin{aligned}
e_H(X)+e_H(Y)
&=e_H(A_1)+e_H(A_2)
+e_H(A_1,B)+e_H(A_2,C)\\
&=e_H(A_1)+e_H(A_2)+\sum_{x\in A}
\min\{|N_H(x)\cap B|,|N_H(x)\cap C|\}\\
&\le\binom{|A|}{2}
+\sum_{x\in A}
\sqrt{|N_H(x)\cap B|\,|N_H(x)\cap C|}\\
&\le\binom{|A|}{2}+|A|\sqrt{10n}\le\binom{100}{2}+100\sqrt{10n}.
\end{aligned}
\]

Since $V_1=B\cup(A\cap V_1)$ and $X=B\cup A_1$,
we have $|X|-|V_1|=|A_1|-|A\cap V_1|$.
Similarly, $|Y|-|V_2|=|A_2|-|A\cap V_2|$.
Hence
$\bigl||X|-|V_1|\bigr|,
\bigl||Y|-|V_2|\bigr|\le|A|\le100$.
On the other hand, by~\eqref{V_1-n/2} and
\eqref{eq:main-H-lower}, we have
\[
\left||V_1|-\frac n2\right|
=\left||V_2|-\frac n2\right|
\le\sqrt{2\left(\frac{n^2}{4}-e(H)\right)}
\le\sqrt{10n}.
\]
Since $H$ is obtained from $G'$ by deleting
$3\nu(G')$ edges, the preceding estimates give
\begin{equation}\label{eq:main-sublinear-estimates}
\begin{aligned}
\left||X|-\frac n2\right|,
\quad\left||Y|-\frac n2\right|
&\le\sqrt{10n}+100,\\
e_{G'}(X)+e_{G'}(Y)
&\le e_H(X)+e_H(Y)+3\nu(G')\\
&\le\binom{100}{2}+100\sqrt{10n}
+3\nu(G').
\end{aligned}
\end{equation}

Let $S_1,S_2$ be minimum vertex covers of
$G'[X]$ and $G'[Y]$, respectively.
Since $B,C$ are independent in $G'$,
the sets $A_1,A_2$ are vertex covers of
$G'[X],G'[Y]$, respectively. Hence
$|S_1|+|S_2|\le|A_1|+|A_2|=|A|\le100$.

By Lemma~\ref{lem:witness}, there exist
$S_1\subseteq T_1\subseteq X$ and
$S_2\subseteq T_2\subseteq Y$
such that the minimum vertex-cover size of
$G'[T_i]$ equals $|S_i|$ for $i=1,2$.
Moreover,
\[
\begin{aligned}
|T_1|+|T_2|\le|S_1|^2+|S_2|^2
+2(|S_1|+|S_2|)\le10200.
\end{aligned}
\]

For each $y\in Y\setminus T_2$, the set
$T_1\setminus N_{G'}(y)$ covers every edge
of $G'[T_1]$ except those contained in
$G'[N_{G'}(y)\cap T_1]$.
Adding one endpoint of each remaining edge
gives a vertex cover of $G'[T_1]$.
Since its minimum vertex-cover size is $|S_1|$,
we have
\[
|T_1\setminus N_{G'}(y)|
\ge |S_1|-e_{G'}(N_{G'}(y)\cap T_1).
\]
Summing over $y\in Y\setminus T_2$ and
using~\eqref{eq:main-bounded-common} gives
\[
\begin{aligned}
\sum_{y\in Y\setminus T_2}
e_{G'}(N_{G'}(y)\cap T_1)&=\sum_{uv\in E(G'[T_1])}
|N_{G'}(u)\cap N_{G'}(v)\cap(Y\setminus T_2)|\\
&\le2\nu(G')e_{G'}(T_1).
\end{aligned}
\]
Consequently,
\[
\begin{aligned}
\sum_{y\in Y\setminus T_2}
|T_1\setminus N_{G'}(y)|&\ge \sum_{y\in Y\setminus T_2} (|S_1|-e_{G'}(N_{G'}(y)\cap T_1)) \\&\ge |S_1|(|Y|-|T_2|)-2\nu(G')e_{G'}(T_1).
\end{aligned}
\]
By symmetry,
\[
\begin{aligned}
\sum_{x\in X\setminus T_1}
|T_2\setminus N_{G'}(x)|\ge |S_2|(|X|-|T_1|)-2\nu(G')e_{G'}(T_2).
\end{aligned}
\]

Since $T_1\subseteq X$ and $T_2\subseteq Y$,
the two sums below count distinct nonadjacent
vertex pairs between $X$ and $Y$. Hence
\begin{equation}\label{eq:main-missing}
\begin{aligned}
|X||Y|-e_{G'}(X,Y)
&\ge\sum_{y\in Y\setminus T_2}
|T_1\setminus N_{G'}(y)|+\sum_{x\in X\setminus T_1}
|T_2\setminus N_{G'}(x)|\\
&\ge |S_1|(|Y|-|T_2|)
+|S_2|(|X|-|T_1|)-2\nu(G')
\bigl(e_{G'}(T_1)+e_{G'}(T_2)\bigr)\\
&\ge\frac{|S_1|+|S_2|}{2}n
-O(\sqrt n+\nu(G')).
\end{aligned}
\end{equation}

The last inequality follows from
\eqref{eq:main-sublinear-estimates},
$|S_1|+|S_2|\le100$, and
$|T_1|+|T_2|\le10200$.

Combining~\eqref{eq:main-sublinear-estimates}
and~\eqref{eq:main-missing}, we obtain
\[
\begin{aligned}
e(G')
&=e_{G'}(X,Y)+e_{G'}(X)+e_{G'}(Y)\\
&\le |X||Y|
-\frac{|S_1|+|S_2|}{2}n
+O(\sqrt n+\nu(G'))\\
&\le\frac{n^2}{4}
-\frac{|S_1|+|S_2|}{2}n
+O(\sqrt n+\nu(G')),
\end{aligned}
\]
where the last inequality follows from $|X|+|Y|=n$.
On the other hand, \eqref{eq:main-minimal-subgraph}
gives
\[
e(G')=e(G)-\nu(G)+\nu(G')
\ge\frac{n^2}{4}-cn-\nu(G)+\nu(G').
\]
Comparing these bounds yields
\[
\begin{aligned}
\left(\frac{|S_1|+|S_2|}{2}-c\right)n
&\le\nu(G)-\nu(G')
+O(\sqrt n+\nu(G'))=O(n^{2/3}),
\end{aligned}
\]
since $\nu(G')\le\nu(G)\le n^{2/3}$.
If $|S_1|+|S_2|\ge4$, then
$(2-c)n\le O(n^{2/3})$, which is impossible
for sufficiently large $n$, since $c<2$ is fixed.
Hence $|S_1|+|S_2|\le3$.

Since $S_1,S_2$ cover all edges of
$G'[X],G'[Y]$, respectively,
the graph $G'-(S_1\cup S_2)$ is bipartite.
By Lemma~\ref{lem:three-vertices},
$\tau(G')\le2\nu(G')$.
Finally, \eqref{eq:main-cover-transfer} gives
\[
\begin{aligned}
\tau(G)\le\nu(G)-\nu(G')+\tau(G')\le\nu(G)+\nu(G')\le2\nu(G).
\end{aligned}
\]
This proves Case 3 and completes the proof
of Theorem~\ref{thm:main}.

\section*{Declaration on the Use of Generative AI}
The authors used ChatGPT 6 Pro to assist in discussing proof strategies, checking proofs, and
improving exposition.

\end{document}